\newtheorem{theorem}{Theorem} [section]
\newtheorem{lemma}[theorem]{Lemma}
\newtheorem{remark}[theorem]{Remark}
\theoremstyle{definition}
\newcommand{\R}{\mathbb{R}}
\newcommand{\N}{\mathbb{N}}
\newcommand{\re}{\text{\upshape Re\,}}
\newcommand{\im}{\text{\upshape Im\,}}
\let\oldbibliography\thebibliography
\renewcommand{\thebibliography}[1]{\oldbibliography{#1}
\setlength{\itemsep}{-0.5pt}}
\def\XXint#1#2#3{{\setbox0=\hbox{$#1{#2#3}{\int}$}
\vcenter{\hbox{$#2#3$}}\kern-.5\wd0}}
\tikzset{->-/.style={decoration={
				markings,
				mark=at position #1 with {\arrow{latex}}},postaction={decorate}}}
	\tikzset{-<-/.style={decoration={
				markings,
				mark=at position #1 with {\arrowreversed{latex}}},postaction={decorate}}}
\tikzset{cross/.style={cross out, draw, 
         minimum size=2*(#1-\pgflinewidth), 
         inner sep=0pt, outer sep=0pt}}
\tikzset{
	master/.style={
		execute at end picture={
			\coordinate (lower right) at (current bounding box.south east);
			\coordinate (upper left) at (current bounding box.north west);
		}
	},
	slave/.style={
		execute at end picture={
			\pgfresetboundingbox
			\path (upper left) rectangle (lower right);
		}
	}
}
\numberwithin{equation}{section}
\def\bigO{{\cal O}}
\newcommand{\oset}[3][0ex]{%
  \mathrel{\mathop{#3}\limits^{
    \vbox to#1{\kern-2\ex@
    \hbox{$\scriptstyle#2$}\vss}}}}
\begin{document}
\title{\vspace{-1cm} A point process on the unit circle with mirror-type interactions}
\author{Christophe Charlier\footnote{Centre for Mathematical Sciences, Lund University, 22100 Lund, Sweden. e-mail: christophe.charlier@math.lu.se
}}

\maketitle

\begin{abstract}
We consider the point process 
\begin{align*}
\frac{1}{Z_{n}}\prod_{1 \leq j < k \leq n} |e^{i\theta_{j}}-e^{-i\theta_{k}}|^{\beta}\prod_{j=1}^{n} d\theta_{j}, \qquad \theta_{1},\ldots,\theta_{n} \in (-\pi,\pi], \quad \beta > 0,
\end{align*}
where $Z_{n}$ is the normalization constant. The feature of this process is that the points $e^{i\theta_{1}},\ldots,e^{i\theta_{n}}$ interact with the mirror points reflected over the real line $e^{-i\theta_{1}},\ldots,e^{-i\theta_{n}}$. 

We study smooth linear statistics of the form $\sum_{j=1}^{n}g(\theta_{j})$ as $n \to \infty$, where $g$ is $2\pi$-periodic. We prove that a wide range of asymptotic scenarios can occur: depending on $g$, the leading order fluctuations around the mean can (i) be of order $n$ and purely Bernoulli, (ii) be of order $1$ and purely Gaussian, (iii) be of order $1$ and purely Bernoulli, or (iv) be of order $1$ and of the form $BN_{1}+(1-B)N_{2}$, where $N_{1},N_{2}$ are two independent Gaussians and $B$ is a Bernoulli that is independent of $N_{1}$ and $N_{2}$. The above list is not exhaustive: the fluctuations can be of order $n$, of order $1$ or $o(1)$, and other random variables can also emerge in the limit. 

We also obtain large $n$ asymptotics for $Z_{n}$ (and some generalizations), up to and including the term of order $1$.

Our proof is inspired by a method developed by McKay and Wormald \cite{McKayWormald} to estimate related $n$-fold integrals.
\end{abstract}
\noindent
{\small{\sc AMS Subject Classification (2020)}: 41A60, 60G55.}

\noindent
{\small{\sc Keywords}: Smooth statistics, asymptotics, point processes.}


\section{Introduction}

Gibbs measures are models for the behavior of interacting particle systems and are widely used to describe phenomena from statistical mechanics \cite{DVJ2008}. While particle interactions can be either attractive or repulsive, the mathematical literature has so far focused more on repulsive point processes. A well-studied example of repulsive point process is the circular $\beta$-ensemble (C$\beta$E), defined by 
\begin{align}\label{CbetaE}
\frac{1}{Z_{n,1}}\prod_{1 \leq j < k \leq n} |e^{i\theta_{j}}-e^{i\theta_{k}}|^{\beta}\prod_{j=1}^{n} d\theta_{j}, \qquad \theta_{1},\ldots,\theta_{n} \in (-\pi,\pi],
\end{align}
where $Z_{n,1}$ is the normalization constant that turns \eqref{CbetaE} into a probability measure. The C$\beta$E models $n$ particles on the unit circle interacting through the two–dimensional Coulomb law at inverse temperature $\beta>0$, see also e.g. \cite[Chapter 2]{For} for further background.

\medskip By switching a sign in \eqref{CbetaE}, either by replacing $e^{i\theta_{k}}$ by $e^{-i\theta_{k}}$ or by turning the minus sign in \eqref{CbetaE} into a plus sign, we obtain the following two point processes
\begin{align}
& \frac{1}{Z_{n,2}}\prod_{1 \leq j < k \leq n} |e^{i\theta_{j}}-e^{-i\theta_{k}}|^{\beta}\prod_{j=1}^{n} d\theta_{j}, \qquad \theta_{1},\ldots,\theta_{n} \in (-\pi,\pi], \label{new point process intro} \\
& \frac{1}{Z_{n,3}}\prod_{1 \leq j < k \leq n} |e^{i\theta_{j}}+e^{i\theta_{k}}|^{\beta}\prod_{j=1}^{n} d\theta_{j}, \qquad \theta_{1},\ldots,\theta_{n} \in (-\pi,\pi], \label{AntipodalType}
\end{align}
where $Z_{n,2}$ and $Z_{n,3}$ are normalizing constants. 

\medskip (Switching simultaneously the two aforementioned signs in \eqref{CbetaE} yields
\begin{align*}
& \frac{1}{Z_{n,2}}\prod_{1 \leq j < k \leq n} |e^{i\theta_{j}}+e^{-i\theta_{k}}|^{\beta}\prod_{j=1}^{n} d\theta_{j}, \qquad \theta_{1},\ldots,\theta_{n} \in (-\pi,\pi],
\end{align*}
which is equivalent to \eqref{new point process intro} by applying a rotation by $90$ degrees. Hence, in what follows, we focus the discussion on \eqref{new point process intro}--\eqref{AntipodalType}.)

\medskip One of the interesting properties of \eqref{new point process intro} and \eqref{AntipodalType} is that they are attractive, in the sense that the densities of \eqref{new point process intro} and \eqref{AntipodalType} are maximized only when all points coincide on the unit circle. Indeed, it is easy to check that for $n\geq 3$, only two configurations maximize \eqref{new point process intro}, namely $(e^{i\theta_{1}},\ldots,e^{i\theta_{n}})=(i,\ldots,i)$ and $(e^{i\theta_{1}},\ldots,e^{i\theta_{n}})=(-i,\ldots,-i)$. Also, for any $n\geq 1$, the set of point configurations maximizing \eqref{AntipodalType} is given by $\{(e^{i\theta_{1}},\ldots,e^{i\theta_{n}})=(e^{i\theta}, \ldots, e^{i\theta}): \theta \in (-\pi,\pi]\}$. 

\medskip The simple structure of the point processes  \eqref{new point process intro} and \eqref{AntipodalType} in combination with their attractive interactions suggests that they could be relevant for several applications. However, we do not aim at finding such applications here; instead, we take the point of view that \eqref{new point process intro} and \eqref{AntipodalType} are sufficiently interesting as simple examples of attractive point processes to merit further study. Furthermore, as the number of points $n$ gets large, \eqref{new point process intro} can be rigorously analyzed using techniques from \cite{McKayWormald} (more on this below), and \eqref{AntipodalType} can be analyzed using techniques from \cite{McKay, IsaevMcKay}. This connection with the works \cite{McKayWormald, McKay, IsaevMcKay} makes these point processes also valuable from a mathematical perspective.

\medskip One can also view \eqref{new point process intro} and \eqref{AntipodalType} as repulsive point processes of a new type, where the repulsion occurs between the points $e^{i\theta_{1}},\ldots,e^{i\theta_{n}}$ and some ``image points", but not between the points themselves. Indeed, the feature of the point process \eqref{new point process intro} is that the points $e^{i\theta_{1}},\ldots,e^{i\theta_{n}}$ are repelled by the image points $e^{-i\theta_{1}},\ldots,$ $e^{-i\theta_{n}}$ obtained by reflection over the real line. For this reason, we call \eqref{new point process intro} a point process with ``mirror-type interactions" (where we think of the real line as the mirror). Similarly, we call \eqref{AntipodalType} a point process with ``antipodal interactions" since the points $e^{i\theta_{1}},\ldots,e^{i\theta_{n}}$ are repelled by the antipodal points $-e^{i\theta_{1}},\ldots,$ $-e^{i\theta_{n}}$. 

\medskip There are significant differences in the asymptotic analysis as $n\to \infty$ of \eqref{new point process intro} and \eqref{AntipodalType}. We discuss these differences at the end of this section. In this work we focus on the point process \eqref{new point process intro}; the point process \eqref{AntipodalType} is studied in the companion paper \cite{C ReflectionPoint}.


\medskip In what follows, we let $(\theta_{1},\ldots,\theta_{n})$ be distributed as in \eqref{new point process intro}, and for convenience we set $Z_{n}:=Z_{n,2}$. Our first result makes precise the idea that for large $n$ only the point configurations that are close to either $(i,\ldots,i)$ or $(-i,\ldots,-i)$ are likely to occur.
\begin{theorem}\label{thm:prob}
Fix $\beta > 0$. For any $\epsilon \in (0,\frac{1}{14})$, there exists $c>0$ such that, for all large enough $n$,
\begin{multline*}
\hspace{-0.2cm}\mathbb{P}\bigg( \hspace{-0.05cm} \Big( |e^{i\theta_{j}}-i|\leq n^{-\frac{1}{2}+\epsilon} \hspace{-0.065cm} \mbox{ for all } j \hspace{-0.05cm} \in \hspace{-0.05cm} \{1,\ldots,n\} \Big) \hspace{-0.1cm} \mbox{ or } \hspace{-0.05cm} \Big( |e^{i\theta_{j}}+i|\leq n^{-\frac{1}{2}+\epsilon} \hspace{-0.065cm} \mbox{ for all } j \hspace{-0.05cm} \in \hspace{-0.05cm} \{1,\ldots,n\} \Big) \hspace{-0.05cm} \bigg) \hspace{-0.05cm} \geq 1-e^{-cn^{2\epsilon}}.
\end{multline*} 
\end{theorem}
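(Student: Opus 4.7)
The plan is to bound the partition function $Z_n$ by identifying the two near-maximum configurations as carrying almost all of the mass. The density is invariant under the global reflection $\theta \mapsto -\theta$, which swaps the neighbourhoods of $i$ and $-i$, so it suffices to show that the contribution to $Z_n$ from the complement of the union of these neighbourhoods is exponentially small relative to $Z_n$. The shift $\phi_j = \theta_j - \pi/2$ makes the density proportional to $\prod_{j<k} |\cos((\phi_j+\phi_k)/2)|^\beta$, with maxima at $\phi\equiv 0$ and $\phi\equiv \pm\pi$; since $|e^{i\theta_j}-i|\asymp|\phi_j|$ near $0$, the event in the theorem translates into concentration of the $\phi_j$'s at one of these two maxima.

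The first step is a Laplace-type lower bound for $Z_n$: restrict integration to $|\phi_j|<\delta$ for a small fixed $\delta>0$, expand
\[
\log|\cos((\phi_j+\phi_k)/2)| = -\tfrac{1}{8}(\phi_j+\phi_k)^2 + O((\phi_j+\phi_k)^4),
\]
and note that the leading quadratic form $\tfrac{\beta}{8}\bigl[(n-2)\sum_j \phi_j^2+\bigl(\sum_j \phi_j\bigr)^2\bigr]$ has eigenvalues of order $n$. A Gaussian integration then yields $Z_n \gtrsim 2\cdot 2^{\beta\binom{n}{2}}\cdot C_\beta^n\cdot n^{-(n-1)/2}$, with the factor $2$ accounting for the two symmetric maxima.

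The second step is to bound the contribution from bad configurations, which I would split into three regimes. (a) If some $\phi_j$ is at distance $\geq \delta$ from both $0$ and $\pm\pi$, then each of the $n-1$ factors $|\cos((\phi_j+\phi_k)/2)|^\beta$ is bounded by some $\rho^\beta$ with $\rho<1$ depending only on $\delta$, giving a suppression $\rho^{\beta(n-1)}$ that dominates the $O(1)^n$ volume and beats the lower bound on $Z_n$ by a factor $e^{-cn}$. (b) If all $\phi_j$ lie within $\delta$ of $\{0,\pm\pi\}$ but are split between the two clusters with populations $n_1,n_2\geq 1$, each of the $n_1 n_2\geq n-1$ cross pairs contributes a factor $\lesssim \delta^\beta$, yielding a suppression $\delta^{\beta n_1 n_2}\leq \delta^{\beta(n-1)}$ that defeats the combinatorial factor $\binom{n}{n_1}\leq 2^n$. (c) If all $\phi_j$ lie in a single cluster, say within $\delta$ of $0$, the Gaussian approximation yields the subgaussian tail $\mathbb{P}(|\phi_j|>n^{-1/2+\epsilon})\lesssim e^{-cn^{2\epsilon}}$ for each $j$, and a union bound over $j\in\{1,\ldots,n\}$ gives the desired conclusion.

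The main obstacle is making the Laplace step in (c) rigorous and uniform down to the scale $n^{-1/2+\epsilon}$. The cumulative quartic correction $\sum_{j<k}(\phi_j+\phi_k)^4$ scales as $n^{4\epsilon}$ on the good regime and must be dominated by the useful quadratic energy $\sim n^{2\epsilon}$ delivered at that scale; combined with the sharpness required in Step 1 to defeat the combinatorial count $\binom{n}{n_1}$ in (b), the optimisation of these several error terms is presumably what forces the technical hypothesis $\epsilon \leq 1/15$.
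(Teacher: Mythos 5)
Your overall strategy---a Laplace lower bound for $Z_n$ at the two maximizing configurations plus exponential suppression of everything else---is the same as the paper's (which adapts McKay--Wormald), and your regimes (b) and (c) correspond, respectively, to the paper's treatment of split clusters and of the strata $J_2(m_2)$, $J_3(\tilde m_1)$ of outliers at intermediate scales. The gap is in regime (a). You claim that if some $\phi_j$ is at distance $\geq\delta$ from both $0$ and $\pm\pi$, then \emph{each} of the $n-1$ factors $|\cos((\phi_j+\phi_k)/2)|^{\beta}$ involving that point is at most $\rho^{\beta}$ for some $\rho<1$. This is false: the mirror interaction depends only on the sum $\phi_j+\phi_k$, so any $\phi_k$ close to $-\phi_j$ (mod $2\pi$) makes that factor equal to its maximal value $1$. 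Concretely, a configuration with half the points at $\phi=a$ and half at $\phi=-a$, with $\delta\leq a\leq\pi-\delta$, lies in your regime (a) and has all $\sim n^2/4$ cross pairs attaining the maximum; it is still globally suppressed, but only through the $\sim n^{2}$ within-group pairs (each contributing $|\cos a|^{\beta}\leq(\cos\delta)^{\beta}$), a mechanism your single-point argument does not see. As written, the step in (a) fails.

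The fix is to count bad \emph{pairs} rather than bad points. This is precisely what the paper's set $\mathcal{J}_1$ accomplishes: with $n_0,n_1,n_2,n_3$ the numbers of points in four angular sectors, it isolates the configurations with $n_0n_2\geq n^{1+\epsilon}$ or $\binom{n_1}{2}\geq n^{1+\epsilon}$ or $\binom{n_3}{2}\geq n^{1+\epsilon}$, each such pair contributing a factor at most $\cos\tau<1$, whence a suppression $(\cos\tau)^{\beta n^{1+\epsilon}}$. On the complement one can only conclude that all but $O(n^{(1+\epsilon)/2})$ points lie near one of the two maxima, and these remaining outliers must then be removed stratum by stratum, each costing at least $e^{-cn^{2\epsilon}}$ (or $e^{-cn}$ for the far ones); this is also where the quantitative Laplace estimate down to scale $n^{-1/2+\epsilon}$ that you correctly flag as the main technical obstacle is carried out, via a one-dimensional bound of the type of Lemma \ref{lemma:exp int over 2t}. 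Your regimes (b) and (c) survive essentially as you describe once (a) is replaced by such a pair-counting decomposition.
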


\begin{figure}[h]
\begin{center}
\begin{tikzpicture}[master]
\node at (0,0) {\includegraphics[width=3.5cm]{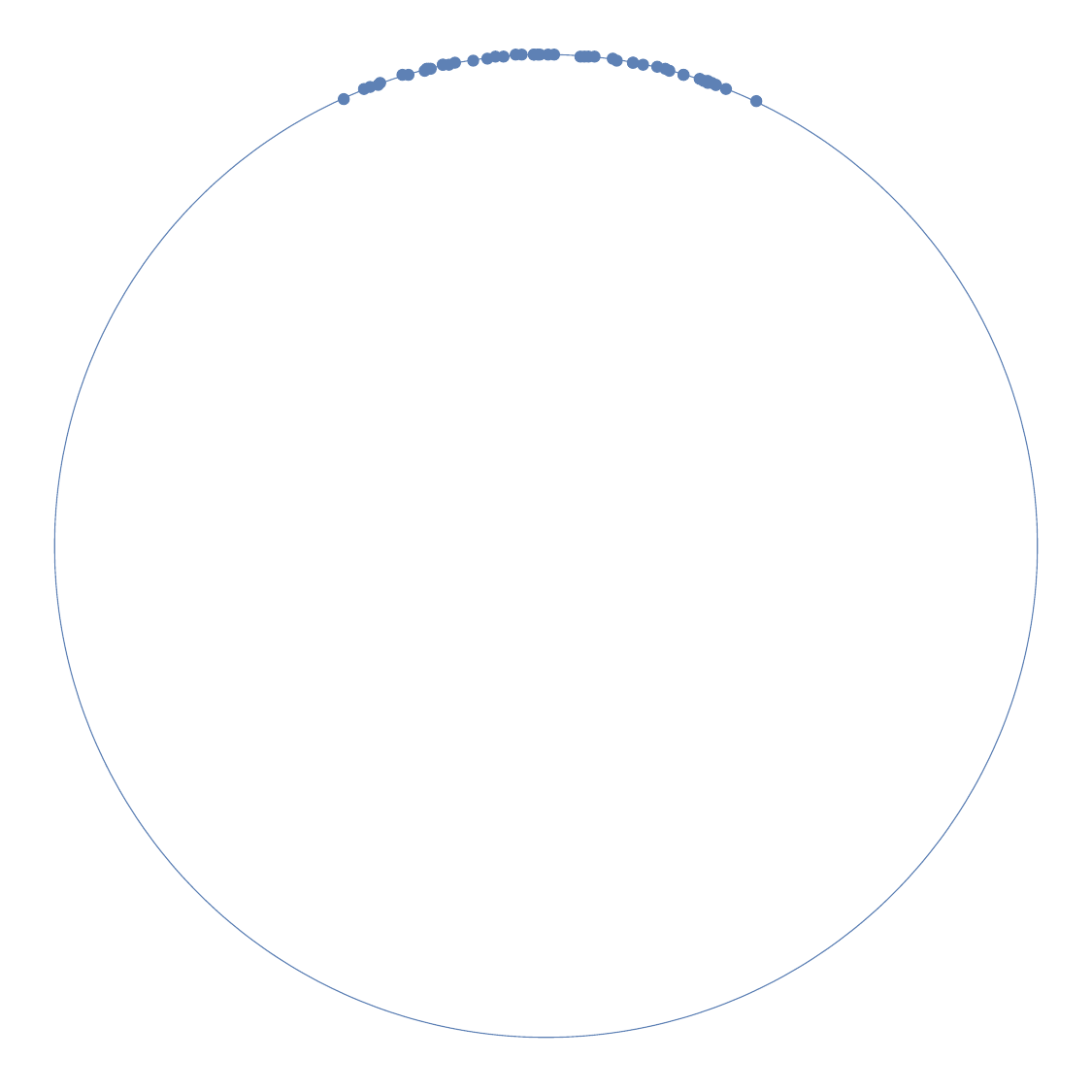}};
\draw[black,line width=0.15 mm,-<-=0.01,->-=1] ([shift=(60:1.35cm)]0,0) arc (60:120:1.35cm);
\node at (0,0.97) {$\bigO(n^{-\frac{1}{2}+\epsilon})$};
\draw[fill] (0,0) circle (0.9pt);
\node at (0.2,0) {$0$};
\end{tikzpicture} \hspace{1.5cm}
\begin{tikzpicture}[slave]
\node at (0,0) {\includegraphics[width=3.5cm]{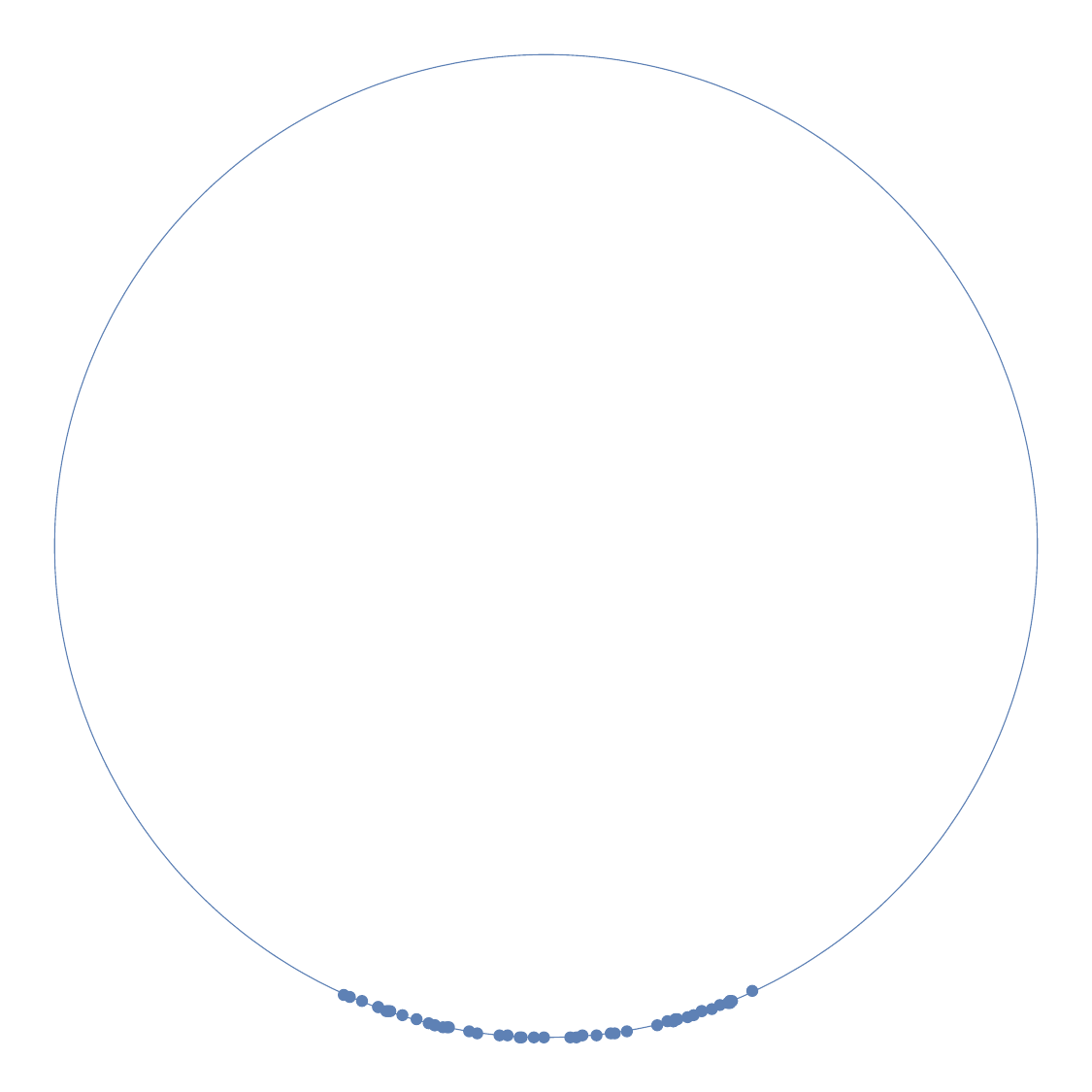}};
\draw[black,line width=0.15 mm,-<-=0.01,->-=1] ([shift=(-120:1.35cm)]0,0) arc (-120:-60:1.35cm);
\node at (0,-0.9) {$\bigO(n^{-\frac{1}{2}+\epsilon})$};
\draw[fill] (0,0) circle (0.9pt);
\node at (0.2,0) {$0$};
\end{tikzpicture}
\caption{\label{fig:mirror}Illustration of the point process \eqref{new point process intro} with $n=50$. With probability $\approx \frac{1}{2}$ all points are close to $i$ (left), and with probability $\approx \frac{1}{2}$ all points are close to  $-i$ (right).}
\end{center}
\end{figure}

Point processes with only mirror-type interactions have not been considered before to our knowledge. The main goal of this paper is to investigate the asymptotic fluctuations as $n \to \infty$ of linear statistics of the form $\sum_{j=1}^{n}g(\theta_{j})$, for fixed $\beta$ and where $g:\mathbb{R}\to \mathbb{R}$ is $2\pi$-periodic and smooth enough in neighborhoods of $\frac{\pi}{2}$ and $-\frac{\pi}{2}$ (it is already clear from Theorem \ref{thm:prob} that the regularity of $g$ outside neighborhoods of $\frac{\pi}{2}$ and $-\frac{\pi}{2}$ does not matter). One of the interesting properties of \eqref{new point process intro} is that, as shown in Theorem \ref{thm:conv in distri} below, many different types of scenarios can occur.

\medskip It is natural to expect from Theorem \ref{thm:prob} some important fluctuations in the large $n$ behavior of $\sum_{j=1}^{n}g(\theta_{j})$. Indeed, for large $n$, the average point configuration contains $\frac{n}{2}$ points near $i$ and $\frac{n}{2}$ points near $-i$, but with overwhelming probability a random point configuration contains either all $n$ points near $i$, or all $n$ points near $-i$ (see Figure \ref{fig:mirror}), and is thus ``very far" from the average. As a consequence, the empirical measure $\mu_{n} := \frac{1}{n}\sum_{j=1}^{n} \delta_{\theta_{j}}$ has no deterministic limit as $n \to \infty$. In fact, we prove in Theorem \ref{thm:conv in distri} that $\mu_{n}$ converges weakly in distribution to the random measure $\mu := B \delta_{\frac{\pi}{2}} + (1-B) \delta_{-\frac{\pi}{2}}$ where $B \sim \mathrm{Bernoulli} (\frac{1}{2})$ (i.e. $\mathbb{P}(B=0)= \mathbb{P}(B=1)=\frac{1}{2}$), namely 
\begin{align}\label{Bernoulli one cut}
\int_{(-\pi,\pi]} g(x) d\mu_{n}(x) \oset{\mathrm{law}}{\underset{n\to \infty}{\xrightarrow{\hspace*{0.85cm}}}} \int_{(-\pi,\pi]} g(x) d\mu(x).
\end{align}


\medskip We will first prove a general result about the large $n$ asymptotics of $n$-fold integrals of the form
\begin{align}\label{main integral}
I(f) = \int_{-\pi}^{\pi} \dots \int_{-\pi}^{\pi}\prod_{1 \leq j < k \leq n} |e^{i\theta_{j}}-e^{-i\theta_{k}}|^{\beta}\prod_{j=1}^{n} e^{f(\theta_{j})} d\theta_{j},
\end{align}
where $f:\mathbb{R}\to \mathbb{C}$ is regular enough and $2\pi$-periodic, see Theorem \ref{thm:main}. As a corollary, we will obtain large $n$ asymptotics for the characteristic function of $\sum_{j=1}^{n}g(\theta_{j})$ simply by considering the ratio $\mathbb{E}\big[\exp\big(it\sum_{j=1}^{n} g(\theta_{j})\big) \big] = \smash{\frac{I(itg)}{I(0)}}$, $t\in \mathbb{R}$. The large $n$ asymptotics of $Z_{n}=I(0)$ are also obtained as the special case $f=0$ of Theorem \ref{thm:main}. 

\medskip The work \cite{McKayWormald} has been the main inspiration for the present paper. In \cite{McKayWormald}, motivated by a combinatorial problem about the enumeration of regular graphs, McKay and Wormald developed a method to obtain large $n$ asymptotics of $n$-fold integrals of the form
\begin{align}\label{nfold integral McKay and Wormald}
\frac{1}{(2\pi i)^{n}} \oint \dots \oint \frac{\prod_{1 \leq j <k \leq n}(1+z_{j}z_{k})}{z_{1}^{d+1}\dots z_{n}^{d+1}}dz_{1}\ldots dz_{n},
\end{align}
where each integral is taken along a circle centered at $0$, and $d=d(n) \in \mathbb{N}$ grows with $n$ at a suitable speed. Remarkably, the method of \cite{McKayWormald} does not rely on the fact that the integrand in \eqref{nfold integral McKay and Wormald} is analytic (except for deforming the contours into circles of suitable radii), and can actually be adapted with little effort to handle integrals of the form \eqref{main integral}. 

\medskip Let $M(f):=\sup_{\theta \in (-\pi,\pi]}\re f(\theta)$. We now state our first main result.
\begin{theorem}\label{thm:main}
Fix $\beta > 0$. Let $f:\mathbb{R}\to \mathbb{C}$ be $2\pi$-periodic, bounded, and $C^{2,q}$ in neighborhoods of $\frac{\pi}{2}$ and $-\frac{\pi}{2}$, with $0<q\leq 1$. Assume also that 
\begin{align}\label{technical condition}
M(f)+\beta \log \cos(\tfrac{\pi}{16}) < \min\{\re f(\tfrac{\pi}{2}),\re f(-\tfrac{\pi}{2})\}.
\end{align}
Then, for any fixed $0<\zeta<\min\{\frac{1}{4},\frac{q}{2}\}$, as $n \to \infty$ we have
\begin{align}
I(f) & = 2^{\beta \frac{n(n-1)}{2}-\frac{1}{2}} \bigg( \frac{8\pi}{\beta n} \bigg)^{\frac{n}{2}} e^{1-\frac{1}{2 \beta}} \bigg[e^{n f(\frac{\pi}{2})}\exp\bigg( \frac{f'(\frac{\pi}{2})^{2}}{\beta} + \frac{2f''(\frac{\pi}{2})}{\beta} + \bigO(n^{-\zeta}) \bigg) \nonumber \\
& \hspace{4.07cm} + e^{n f(-\frac{\pi}{2})}\exp\bigg( \frac{f'(-\frac{\pi}{2})^{2}}{\beta} + \frac{2f''(-\frac{\pi}{2})}{\beta} + \bigO(n^{-\zeta}) \bigg) \bigg]. \label{asymptotic If}
\end{align} 
Furthermore, if $\re f \equiv 0$ and $t \in \mathbb{R}$, then the large $n$ asymptotics of $I(tf)$, which are given by \eqref{asymptotic If} with $f$ replaced by $tf$, hold uniformly for $t$ in compact subsets of $\mathbb{R}$. 
\end{theorem}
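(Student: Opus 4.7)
The proof follows the Laplace-type strategy developed in \cite{McKayWormald}. The first step is a localization argument: using an energy estimate similar in spirit to the one behind Theorem \ref{thm:prob}, I would show that the integrand of $I(f)$ is exponentially concentrated on the two symmetric regions
\[
\Omega_\pm = \bigl\{(\theta_1,\ldots,\theta_n)\in(-\pi,\pi]^n : |\theta_j\mp\tfrac{\pi}{2}|\leq n^{-1/2+\epsilon}\text{ for all }j\bigr\}.
\]
Condition \eqref{technical condition} is exactly what is needed to prevent the weight $e^{f}$ from spoiling this concentration: outside $\Omega_+\cup\Omega_-$, the interaction factor loses at least $\cos(\tfrac{\pi}{16})^{\beta}$ on a positive fraction of pairs, a loss that \eqref{technical condition} prevents $e^{\re f}$ from offsetting. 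By the symmetry $(\theta_1,\ldots,\theta_n)\mapsto(-\theta_1,\ldots,-\theta_n)$ of the integrand, the contributions from $\Omega_+$ and $\Omega_-$ are of the same form with $f(\tfrac{\pi}{2})$ swapped for $f(-\tfrac{\pi}{2})$, so it suffices to analyze $\Omega_+$.

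On $\Omega_+$ I would substitute $\theta_j=\tfrac{\pi}{2}+\phi_j$ and use the identity $|e^{i\theta_j}-e^{-i\theta_k}|=2\cos\tfrac{\phi_j+\phi_k}{2}$ to extract the prefactor $2^{\beta\binom{n}{2}}$ together with
\[
\beta\sum_{j<k}\log\cos\tfrac{\phi_j+\phi_k}{2} = -\tfrac{\beta}{8}\Bigl[(n-2)\sum_{j}\phi_j^2+\bigl(\sum_{j}\phi_j\bigr)^2\Bigr]-\tfrac{\beta}{192}\sum_{j<k}(\phi_j+\phi_k)^4+\cdots.
\]
At the natural scale $\phi_j=\bigO(n^{-1/2})$ the quartic and higher terms contribute only $\bigO(1)$ in the exponent, while the $C^{2,q}$-Taylor expansion $f(\tfrac{\pi}{2}+\phi_j)=f(\tfrac{\pi}{2})+f'(\tfrac{\pi}{2})\phi_j+\tfrac{1}{2}f''(\tfrac{\pi}{2})\phi_j^2+\bigO(|\phi_j|^{2+q})$ produces a remainder of order $n\cdot n^{-(1+q/2)}=n^{-q/2}$, consistent with the claimed error $\bigO(n^{-\zeta})$ provided $\zeta<q/2$.

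The main technical obstacle is the rank-one coupling $(\sum_j\phi_j)^2$ in the quadratic form, which prevents the $n$-fold integral from factorizing. I would decouple it with a Hubbard--Stratonovich transform,
\[
e^{-\tfrac{\beta}{8}(\sum_j\phi_j)^2}=\sqrt{\tfrac{2}{\pi\beta}}\int_{\mathbb{R}} e^{-\frac{2t^2}{\beta}-it\sum_j\phi_j}\,dt,
\]
turning the $n$-fold integral into a product of $n$ identical one-dimensional Gaussian integrals in $\phi_j$ with linear coefficient $f'(\tfrac{\pi}{2})-it$, followed by a single integration in $t$. After the rescaling $\phi_j=y_j\sqrt{8/(\beta n)}$ and extension of each $\phi_j$-integral to $\mathbb{R}$ (at an $e^{-cn^{2\epsilon}}$ cost), the product of the one-dimensional Gaussians produces $(8\pi/(\beta n))^{n/2}\exp(2n(f'(\tfrac{\pi}{2})-it)^2/((n-2)\beta))$; carrying out the remaining Gaussian $t$-integral combines cleanly to yield the factors $2^{-1/2}$ and $\exp(f'(\tfrac{\pi}{2})^2/\beta)$ in \eqref{asymptotic If}. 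The factor $\exp(2f''(\tfrac{\pi}{2})/\beta)$ comes from absorbing the $\tfrac{1}{2}f''(\tfrac{\pi}{2})\phi_j^2$ term into the Gaussian variance (which contributes $(1-4f''(\tfrac{\pi}{2})/((n-2)\beta))^{-n/2}$), and the universal constant $e^{1-1/(2\beta)}$ is produced by the Gaussian expectation of the quartic correction $(\phi_j+\phi_k)^4$ (odd-moment cubic terms cancel). Summing the symmetric contribution from $\Omega_-$ gives \eqref{asymptotic If}. The final uniformity in $t$ on compacts when $\re f\equiv 0$ follows because condition \eqref{technical condition} is trivial for $tf$ and all error estimates above depend polynomially on the relevant $C^{2,q}$-seminorm of $tf$, which is linear in $t$.
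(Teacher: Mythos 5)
Your overall architecture (localization to the two hubs $\pm\frac{\pi}{2}$, Taylor expansion of $\log\cos$ and of $f$, Gaussian computation of the local contribution) matches the paper's, and your leading-order bookkeeping is essentially correct. The Hubbard--Stratonovich decoupling of the rank-one piece $(\sum_j\phi_j)^2$ is a legitimate alternative to the paper's linear change of variables $\boldsymbol{\eta}=T\boldsymbol{y}$, $T=I_n-\gamma J_n/n$ (Lemma \ref{eta to y transformation}), and it does reproduce the factors $2^{-1/2}$ and $e^{f'(\pi/2)^2/\beta}$ in \eqref{asymptotic If}.

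The genuine gap is your treatment of the quartic correction $-\frac{\beta}{192}\sum_{j<k}(\phi_j+\phi_k)^4$. This term contributes at order $1$ to the exponent---it is responsible for the constant $-\frac{1}{2\beta}$ in \eqref{asymptotic If}---so it cannot be bounded away, and it does not factorize over $j$: writing $p_k=\sum_j\phi_j^k$, one has $\sum_{j<k}(\phi_j+\phi_k)^4=(n-8)p_4+4p_1p_3+3p_2^2$, and the cross terms $p_2^2$ and $p_1p_3$ survive the Hubbard--Stratonovich step, so the integral is \emph{not} reduced to a product of one-dimensional Gaussian integrals as you claim. Replacing this term by ``the Gaussian expectation of the quartic correction'' is precisely the step that needs proof: one must show that $\Sigma_4:=\sum_{j<k}(\phi_j+\phi_k)^4$ concentrates sharply enough that $\mathbb{E}[e^{-\frac{\beta}{192}\Sigma_4}]=e^{-\frac{\beta}{192}\mathbb{E}[\Sigma_4]}(1+\bigO(n^{-\zeta}))$, with an error compatible with the claimed rate $\zeta<\min\{\frac14,\frac q2\}$. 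This is the technical core of the McKay--Wormald method (the paper's Lemma \ref{lemma:ABCD}, which evaluates $\int\exp(-An\mu_2+En\mu_4+F\mu_2^2+\cdots)$ with quantitative errors), and your proposal supplies no substitute for it. Two smaller inaccuracies: extending the $\phi_j$-integrals to $\mathbb{R}$ is only harmless \emph{after} the quartic term has been handled, since the upper bound of Lemma \ref{lemma:exp bound for cos} carries a $+x^4$ with the unfavorable sign at large $|x|$, so the order of operations matters; and the constant $e^{1}$ actually arises from $(1-\frac{2}{n})^{-n/2}$ in the Gaussian normalization (the $(n-2)$ versus $n$ in the quadratic form), not from the quartic expectation, which accounts only for $e^{-1/(2\beta)}$. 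Neither of these two points is fatal, but the missing concentration argument for the non-factorizing quartic terms is where essentially all of the work lies.
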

\begin{remark}
Condition \eqref{technical condition} comes from some technicalities in our analysis and can probably be weakened. For the applications on the linear statistics below, we will only use Theorem \ref{thm:main} with $f:\mathbb{R}\to i \mathbb{R}$, i.e. $\re f \equiv 0$, for which \eqref{technical condition} is automatically verified. 
\end{remark}
\begin{remark}
The leading prefactor in \eqref{asymptotic If}, namely $2^{\beta \frac{n(n-1)}{2}}$, is equal to the double product in \eqref{main integral} when $\theta_{1}=\ldots=\theta_{n}$ with $\theta_{1}\in \{-\frac{\pi}{2},\frac{\pi}{2}\}$.
\end{remark}
The following result on the characteristic function of $\sum_{j=1}^{n}g(\theta_{j})$ is a direct consequence of Theorem \ref{thm:main}.
\begin{theorem}\label{thm:moment generating function}
Fix $\beta > 0$. Let $t \in \mathbb{R}$ and let $g:\mathbb{R}\to \mathbb{R}$ be $2\pi$-periodic, bounded, and $C^{2,q}$ in neighborhoods of $\frac{\pi}{2}$ and $-\frac{\pi}{2}$, with $0<q\leq 1$. For any fixed $0<\zeta<\min\{\frac{1}{4},\frac{q}{2}\}$, as $n \to \infty$ we have
\begin{align}
\mathbb{E}\bigg[e^{it\sum_{j=1}^{n} g(\theta_{j})} \bigg] = \frac{I(itg)}{I(0)} & = \frac{e^{n i tg(\frac{\pi}{2})}}{2}\exp\bigg( -\frac{g'(\frac{\pi}{2})^{2}}{\beta}t^{2} + \frac{2g''(\frac{\pi}{2})}{\beta}it + \bigO(n^{-\zeta}) \bigg) \nonumber \\
&  + \frac{e^{n i tg(-\frac{\pi}{2})}}{2}\exp\bigg( -\frac{g'(-\frac{\pi}{2})^{2}}{\beta}t^{2} + \frac{2g''(-\frac{\pi}{2})}{\beta}it + \bigO(n^{-\zeta}) \bigg). \label{asymp for moment generating function}
\end{align}
Furthermore, the above asymptotics hold uniformly for $t$ in compact subsets of $\mathbb{R}$.
\end{theorem}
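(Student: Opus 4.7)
The statement is presented as an immediate corollary of Theorem \ref{thm:main}, and the plan is simply to apply Theorem \ref{thm:main} twice --- once with $f := itg$ and once with $f \equiv 0$ --- and take the ratio. By the definition of the joint density \eqref{new point process intro}, $\mathbb{E}\bigl[e^{it\sum_{j=1}^{n} g(\theta_{j})}\bigr] = I(itg)/I(0)$, so it suffices to expand numerator and denominator via \eqref{asymptotic If} and combine.

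First I would check that the hypotheses of Theorem \ref{thm:main} are satisfied for $f := itg$. Since $g$ is real-valued and $t \in \R$, $\re f \equiv 0$; hence $M(f)=0$ and condition \eqref{technical condition} reduces to $\beta \log \cos(\tfrac{\pi}{16}) < 0$, which holds automatically. Periodicity, boundedness and local $C^{2,q}$ regularity of $f$ in neighborhoods of $\pm\tfrac{\pi}{2}$ are inherited directly from $g$. Moreover, because $\re f \equiv 0$, the uniformity-in-$t$ clause at the end of Theorem \ref{thm:main} applies and will deliver the claimed uniformity here.

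Next I would substitute. Using $f^{(k)}(\pm\tfrac{\pi}{2}) = it\,g^{(k)}(\pm\tfrac{\pi}{2})$ for $k=0,1,2$, one has $f'(\pm\tfrac{\pi}{2})^{2}/\beta = -t^{2}\, g'(\pm\tfrac{\pi}{2})^{2}/\beta$ and $2f''(\pm\tfrac{\pi}{2})/\beta = 2it\,g''(\pm\tfrac{\pi}{2})/\beta$. Setting
\[
  P_{n} := 2^{\beta n(n-1)/2 - 1/2}\Bigl(\frac{8\pi}{\beta n}\Bigr)^{n/2}e^{\,1 - 1/(2\beta)},
  \qquad
  E_{\pm} := \exp\!\Bigl( -\tfrac{g'(\pm\pi/2)^{2}}{\beta}\,t^{2} + \tfrac{2g''(\pm\pi/2)}{\beta}\,it + \bigO(n^{-\zeta}) \Bigr),
\]
Theorem \ref{thm:main} yields $I(itg) = P_{n}\bigl[\,e^{nitg(\pi/2)} E_{+} + e^{nitg(-\pi/2)} E_{-}\,\bigr]$ and $I(0) = P_{n}\bigl[\,2 + \bigO(n^{-\zeta})\,\bigr]$. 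The prefactor $P_{n}$ cancels identically in the ratio.

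Finally, I would handle the error bookkeeping. Writing $(2+\bigO(n^{-\zeta}))^{-1} = \tfrac{1}{2}\bigl(1+\bigO(n^{-\zeta})\bigr) = \tfrac{1}{2}e^{\bigO(n^{-\zeta})}$, and noting that the oscillatory factors $e^{nitg(\pm\pi/2)}$ have modulus $1$ while $E_{\pm}$ are bounded uniformly in $n$ for $t$ in any compact subset of $\R$, this overall $e^{\bigO(n^{-\zeta})}$ can be absorbed into each $E_{\pm}$, producing exactly \eqref{asymp for moment generating function}. Since all the nontrivial analysis is packaged into Theorem \ref{thm:main}, there is no substantive obstacle; the only point deserving any care is this redistribution of the $\bigO(n^{-\zeta})$ error from the denominator into the two exponents, and the uniformity in $t$ on compact subsets of $\R$ is inherited verbatim from the corresponding uniformity in Theorem \ref{thm:main}.
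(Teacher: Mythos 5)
Your proposal is correct and is precisely the argument the paper intends: the paper presents Theorem \ref{thm:moment generating function} as a direct consequence of Theorem \ref{thm:main}, obtained by setting $f=itg$ (for which $\re f\equiv 0$, so $M(f)=0$ and \eqref{technical condition} holds trivially) and $f=0$, cancelling the common prefactor, and absorbing the denominator's $\bigO(n^{-\zeta})$ into the exponents. Your verification of the hypotheses, the substitution $f'(\pm\tfrac{\pi}{2})^{2}=-t^{2}g'(\pm\tfrac{\pi}{2})^{2}$, the error bookkeeping, and the inheritance of uniformity in $t$ from the $\re f\equiv 0$ clause of Theorem \ref{thm:main} are all exactly what is needed.
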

Theorem \ref{thm:moment generating function} implies, in the generic case where $g'(\frac{\pi}{2})\neq g'(-\frac{\pi}{2})$, that
\begin{align}\label{lol2}
\mathbb{E}\bigg[e^{it\sum_{j=1}^{n} g(\theta_{j}) } \bigg] & = \big(1+\bigO(n^{-\zeta})\big) \mathbb{E}[e^{n i t \big(g(\tfrac{\pi}{2})B + g(-\tfrac{\pi}{2})(1-B)\big) + it (BN_{1} + (1-B)N_{2})}]
\end{align}
holds as $n \to \infty$ uniformly for $t$ in compact subsets of $\R$, where $N_{1},N_{2},B$ are random variables independent of each other and distributed as
\begin{align*}
N_{1} \sim \mathcal{N}_{\mathbb{R}}\Big(\frac{2g''(\frac{\pi}{2})}{\beta},\frac{2g'(\frac{\pi}{2})^{2}}{\beta}\Big), \qquad N_{2} \sim \mathcal{N}_{\mathbb{R}}\Big(\frac{2g''(-\frac{\pi}{2})}{\beta},\frac{2g'(-\frac{\pi}{2})^{2}}{\beta}\Big), \qquad B \sim \mathrm{Bernoulli} \Big(\frac{1}{2}\Big),
\end{align*}
i.e. the density of $N_{1}$ is $\frac{\sqrt{\beta}}{2g'(\frac{\pi}{2})\sqrt{\pi}}\exp\big(\frac{-\beta}{4g'(\frac{\pi}{2})^{2}} (x-\frac{2g''(\frac{\pi}{2})}{\beta})^{2}\big)$. (For $g'(\frac{\pi}{2})\neq g'(-\frac{\pi}{2})$ and $t$ in compact subsets of $\R$, the expectation on the right-hand side of \eqref{lol2} stays bounded away from $0$ as $n\to\infty$; we have used this to turn the two error terms $\bigO(n^{-\zeta})$ in \eqref{asymp for moment generating function} into a single multiplicative error term in \eqref{lol2}.) Informally, one can interpret \eqref{lol2} as
\begin{align}\label{lol6}
\sum_{j=1}^{n} g(\theta_{j}) = n \Big(g(\tfrac{\pi}{2})B + g(-\tfrac{\pi}{2})(1-B)\Big) + BN_{1} + (1-B)N_{2} + o(1), \qquad \mbox{as } n \to \infty.
\end{align}
In other words, in the generic case where $g(\frac{\pi}{2})\neq g(-\frac{\pi}{2})$ and $g'(\frac{\pi}{2})\neq g'(-\frac{\pi}{2})$ hold, the leading order fluctuations of $\sum_{j=1}^{n} g(\theta_{j})$ are purely Bernoulli and of order $n$, and the subleading fluctuations are of order $1$ and of the form $BN_{1} + (1-B)N_{2}$. The leading order fluctuations are related to the global behavior of the points: $B=1$ means that all $\theta_{j}$'s are close to $\frac{\pi}{2}$ with high probability, while $B=0$ means that all $\theta_{j}$'s are close to $-\frac{\pi}{2}$ with high probability. The random variables $N_{1}$ and $N_{2}$ are, on the other hand, related to the local behavior of the $\theta_{j}$. To see this, suppose for example that $B=1$; then \eqref{lol6} can be rewritten as
\begin{align*}
\sum_{j=1}^{n} \big( g(\theta_{j})-g(\tfrac{\pi}{2}) \big) = N_{1}+o(1), \qquad \mbox{as } n \to \infty. 
\end{align*}
Thus $N_{1}$ gives information about how the $\theta_{j}$'s fluctuate around $\frac{\pi}{2}$ if $B=1$. Similarly, $N_{2}$ gives information about how the $\theta_{j}$'s fluctuate around $-\frac{\pi}{2}$ if $B=0$.

\medskip Our next theorem shows that there are also some interesting non-generic cases which produce different types of asymptotic behaviors. 
Let us define
\begin{align*}
& \nu_{1} = \frac{g(\frac{\pi}{2})+g(-\frac{\pi}{2})}{2}, & & \nu_{2} = \frac{g''(\frac{\pi}{2})+g''(-\frac{\pi}{2})}{\beta}.
\end{align*} 
We have the following result about convergence in distribution of the smooth linear statistics. 

\begin{theorem}\label{thm:conv in distri}
Fix $\beta > 0$. Let $g:\mathbb{R}\to \mathbb{R}$ be $2\pi$-periodic, bounded, and $C^{2,q}$ in neighborhoods of $\frac{\pi}{2}$ and $-\frac{\pi}{2}$, with $0<q\leq 1$. 
\begin{itemize}
\item[(a)] Define $\mu_{n} = \frac{1}{n}\sum_{j=1}^{n} \delta_{\theta_{j}}$. We have
\begin{align*}
\int_{(-\pi,\pi]} g(x) d\mu_{n}(x) \oset{\mathrm{law}}{\underset{n\to \infty}{\xrightarrow{\hspace*{0.85cm}}}} \int_{(-\pi,\pi]} g(x) d\mu(x),
\end{align*}
where $\mu := B \delta_{\frac{\pi}{2}} + (1-B) \delta_{-\frac{\pi}{2}}$. Equivalently, 
\begin{align*}
n^{-1}\bigg(\sum_{j=1}^{n} g(\theta_{j})-\nu_{1}n\bigg)  \oset{\mathrm{law}}{\underset{n\to \infty}{\xrightarrow{\hspace*{0.85cm}}}} \frac{g(\frac{\pi}{2})-g(-\frac{\pi}{2})}{2}(2B-1).
\end{align*}
\item[(b)] If $g(\frac{\pi}{2}) = g(-\frac{\pi}{2})$ and $g'(\frac{\pi}{2}) \neq 0 \neq g'(-\frac{\pi}{2})$, then
\begin{align*}
\sum_{j=1}^{n} g(\theta_{j})-\nu_{1}n \oset{\mathrm{law}}{\underset{n\to \infty}{\xrightarrow{\hspace*{0.85cm}}}} BN_{1} + (1-B)N_{2}.
\end{align*}
In particular, if $g(\frac{\pi}{2}) = g(-\frac{\pi}{2})$, $g'(\frac{\pi}{2}) = g'(-\frac{\pi}{2}) \neq 0$ and $g''(\frac{\pi}{2})=g''(-\frac{\pi}{2})$, then $N_{1}$ and $N_{2}$ are equal in distribution and
\begin{align*}
\sum_{j=1}^{n} g(\theta_{j})-\nu_{1}n \oset{\mathrm{law}}{\underset{n\to \infty}{\xrightarrow{\hspace*{0.85cm}}}} N_{1}.
\end{align*}
\item[(c)] If $g(\frac{\pi}{2}) = g(-\frac{\pi}{2})$, $g'(\frac{\pi}{2}) \neq 0$, and $g'(-\frac{\pi}{2})=0$, then
\begin{align*}
\sum_{j=1}^{n} g(\theta_{j})-\nu_{1}n \oset{\mathrm{law}}{\underset{n\to \infty}{\xrightarrow{\hspace*{0.85cm}}}} BN_{1} + (1-B)\frac{2g''(-\frac{\pi}{2})}{\beta}. 
\end{align*}
Similarly, if $g(\frac{\pi}{2}) = g(-\frac{\pi}{2})$, $g'(-\frac{\pi}{2}) \neq 0$, and $g'(\frac{\pi}{2})=0$, then
\begin{align*}
\sum_{j=1}^{n} g(\theta_{j})-\nu_{1}n \oset{\mathrm{law}}{\underset{n\to \infty}{\xrightarrow{\hspace*{0.85cm}}}} B\frac{2g''(\frac{\pi}{2})}{\beta} + (1-B)N_{2}.
\end{align*}
\item[(d)] If $g(\frac{\pi}{2}) = g(-\frac{\pi}{2})$ and $g'(\frac{\pi}{2}) = 0 = g'(-\frac{\pi}{2})$, then
\begin{align*}
\sum_{j=1}^{n} g(\theta_{j})-(\nu_{1}n + \nu_{2}) \oset{\mathrm{law}}{\underset{n\to \infty}{\xrightarrow{\hspace*{0.85cm}}}} \frac{g''(\frac{\pi}{2})-g''(-\frac{\pi}{2})}{\beta}(2B-1).
\end{align*}
\end{itemize}
\end{theorem}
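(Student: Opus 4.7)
The plan is to reduce all four parts to a direct application of Lévy's continuity theorem, using Theorem \ref{thm:moment generating function} as the sole analytic input. For each part, I need to identify a linear rescaling/centering of $\sum_{j=1}^{n}g(\theta_{j})$ whose characteristic function, evaluated via \eqref{asymp for moment generating function}, converges pointwise (in fact uniformly on compact sets) to the characteristic function of the claimed limit, and then I just need to recognize the limiting characteristic function as that of $\frac{g(\pi/2)-g(-\pi/2)}{2}(2B-1)$, $BN_{1}+(1-B)N_{2}$, etc.

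For part (a), I would consider $S_{n}:=n^{-1}\bigl(\sum_{j} g(\theta_{j})-\nu_{1}n\bigr)$ and compute $\mathbb{E}[e^{itS_{n}}] = e^{-it\nu_{1}}\,\mathbb{E}[e^{i(t/n)\sum_{j}g(\theta_{j})}]$. Applying Theorem \ref{thm:moment generating function} with $t$ replaced by $t/n$ (which stays in a compact set, so the uniformity of the error is used), the terms $\exp(\mathcal{O}(t^{2}/n^{2}))$ and $\exp(\mathcal{O}(t/n))$ tend to $1$, and the fast-oscillating factors $e^{nit'g(\pm\pi/2)}$ become $e^{it g(\pm\pi/2)}$. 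Combined with $e^{-it\nu_{1}}$ one gets $\tfrac{1}{2}e^{it(g(\pi/2)-g(-\pi/2))/2}+\tfrac{1}{2}e^{-it(g(\pi/2)-g(-\pi/2))/2}$, which is the characteristic function of $\frac{g(\pi/2)-g(-\pi/2)}{2}(2B-1)$. The equivalence with weak convergence of $\mu_{n}$ to $\mu$ follows because any bounded continuous test function on $(-\pi,\pi]$ is a particular case of the $g$ covered by (a), so the claimed convergence of integrals against $\mu_{n}$ characterises the random measure $B\delta_{\pi/2}+(1-B)\delta_{-\pi/2}$.

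For parts (b)-(d), I work instead with $T_{n}:=\sum_{j}g(\theta_{j})-\nu_{1}n$, so that $\mathbb{E}[e^{itT_{n}}]=e^{-it\nu_{1}n}\mathbb{E}[e^{it\sum_{j}g(\theta_{j})}]$. The hypothesis $g(\pi/2)=g(-\pi/2)$ (hence $=\nu_{1}$) is exactly what is needed to cancel the factors $e^{nitg(\pm\pi/2)}$ against $e^{-it\nu_{1}n}$. After this cancellation, \eqref{asymp for moment generating function} gives
\begin{align*}
\mathbb{E}[e^{itT_{n}}]\to \tfrac{1}{2}\exp\bigl(-\tfrac{g'(\pi/2)^{2}}{\beta}t^{2}+\tfrac{2g''(\pi/2)}{\beta}it\bigr)+\tfrac{1}{2}\exp\bigl(-\tfrac{g'(-\pi/2)^{2}}{\beta}t^{2}+\tfrac{2g''(-\pi/2)}{\beta}it\bigr),
\end{align*}
uniformly in $t$ on compacts. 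I then match this against the characteristic function of $BN_{1}+(1-B)N_{2}$ (which is a half-and-half mixture of two Gaussian characteristic functions, and in the prescribed $N_{1},N_{2}$ parametrisation the variances $2g'(\pm\pi/2)^{2}/\beta$ and means $2g''(\pm\pi/2)/\beta$ give precisely the factors above): this yields (b). When one or both of $g'(\pm\pi/2)$ vanishes, the corresponding quadratic term disappears and the associated Gaussian degenerates to a point mass at $\frac{2g''(\pm\pi/2)}{\beta}$, giving (c). For (d), both $g'(\pm\pi/2)$ vanish, so I further subtract $\nu_{2}$ to centre the two remaining drifts symmetrically; the limiting characteristic function becomes $\tfrac{1}{2}e^{it(g''(\pi/2)-g''(-\pi/2))/\beta}+\tfrac{1}{2}e^{-it(g''(\pi/2)-g''(-\pi/2))/\beta}$, which is the characteristic function of $\frac{g''(\pi/2)-g''(-\pi/2)}{\beta}(2B-1)$. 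The ``in particular'' statement in (b) is a trivial consequence of $N_{1}\stackrel{d}{=}N_{2}$ and independence of $B$.

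There is no genuine obstacle beyond bookkeeping: everything rests on Theorem \ref{thm:moment generating function}, and the only subtle point is making sure the error term $\mathcal{O}(n^{-\zeta})$ remains negligible after the rescaling (for part (a), after the substitution $t\mapsto t/n$ the error is still $\mathcal{O}(n^{-\zeta})$ since the uniformity is on compact $t$-sets, which is the statement I need). The recognition step---identifying the explicit mixtures of Gaussians, point masses, and $\pm 1$-valued Bernoulli variables from their characteristic functions---is elementary.
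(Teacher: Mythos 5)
Your proposal is correct and follows essentially the same route as the paper: apply \eqref{asymp for moment generating function} with $t$ replaced by $t/n$ for part (a) (using the uniformity on compact $t$-sets to control the $\bigO(n^{-\zeta})$ error), use the cancellation $g(\frac{\pi}{2})=g(-\frac{\pi}{2})=\nu_{1}$ of the oscillating factors for parts (b)--(d), match the resulting limits against the explicit characteristic functions of the Bernoulli/Gaussian mixtures, and conclude by L\'evy's continuity theorem. No gaps.
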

\begin{remark}
Each random variable in Theorem \ref{thm:conv in distri} has a variance that increases as $\beta^{-1}$ increases. This is consistent with the expectation that as $\beta^{-1}$ increases, the random point configurations of \eqref{new point process intro} should become less localized around $(i,\ldots,i)$ and $(-i,\ldots,-i)$.
\end{remark}
\begin{remark}\label{remark:other rv}
If $g(\frac{\pi}{2}) = g(-\frac{\pi}{2})$, $g'(\frac{\pi}{2}) = 0 = g'(-\frac{\pi}{2})$ and $g''(\frac{\pi}{2}) = g''(-\frac{\pi}{2})$, then $\sum_{j=1}^{n} g(\theta_{j})-(\nu_{1}n + \nu_{2})$ is typically of order $\bigO(n^{-\zeta})$ and our result \eqref{asymp for moment generating function} is not precise enough to understand the fluctuations in this situation. 
\end{remark}
\begin{proof}[Proof of Theorem \ref{thm:conv in distri}]
Recall that \eqref{asymp for moment generating function} holds uniformly for $t$ in compact subsets of $\mathbb{R}$. Hence, using \eqref{asymp for moment generating function} but with $t$ replaced by $s/n$, $s \in \mathbb{R}$ fixed, we obtain
\begin{align*}
\mathbb{E}\bigg[e^{isn^{-1}(\sum_{j=1}^{n} g(\theta_{j}) - n\nu_{1})} \bigg] & = \frac{e^{i s\frac{g(\frac{\pi}{2})-g(-\frac{\pi}{2})}{2}}}{2}\exp\big( \bigO(n^{-\zeta}) \big) + \frac{e^{i s \frac{g(-\frac{\pi}{2})-g(\frac{\pi}{2})}{2}}}{2}\exp\big( \bigO(n^{-\zeta}) \big) \\
& = \mathbb{E}[e^{is \frac{g(\frac{\pi}{2})-g(-\frac{\pi}{2})}{2}(2B-1)}] + \bigO(n^{-\zeta})
\end{align*}
as $n \to \infty$. Claim (a) now directly follows from L\'{e}vy's continuity theorem. Let us now consider the case $g(\frac{\pi}{2}) = g(-\frac{\pi}{2})$. Using again \eqref{asymp for moment generating function}, but now with $t\in \mathbb{R}$ fixed, we obtain
\begin{align*}
\mathbb{E}\bigg[e^{it(\sum_{j=1}^{n} g(\theta_{j}) - n\nu_{1})} \bigg] = \frac{1}{2}e^{-\frac{g'(\frac{\pi}{2})^{2}}{\beta}t^{2} + \frac{2g''(\frac{\pi}{2})}{\beta}it + \bigO(n^{-\zeta}) } + \frac{1}{2} e^{ -\frac{g'(-\frac{\pi}{2})^{2}}{\beta}t^{2} + \frac{2g''(-\frac{\pi}{2})}{\beta}it + \bigO(n^{-\zeta}) }
\end{align*}
as $n \to \infty$. Since 
\begin{align*}
& \mathbb{E}[e^{it (BN_{1} + (1-B)N_{2})}] = \frac{1}{2}\mathbb{E}[e^{it N_{1}}] + \frac{1}{2}\mathbb{E}[e^{it N_{2}}] = \frac{1}{2}e^{-\frac{g'(\frac{\pi}{2})^{2}}{\beta}t^{2} + \frac{2g''(\frac{\pi}{2})}{\beta}it} + \frac{1}{2} e^{ -\frac{g'(-\frac{\pi}{2})^{2}}{\beta}t^{2} + \frac{2g''(-\frac{\pi}{2})}{\beta}it}, 
\end{align*}
claim (b) now also directly follows from L\'{e}vy's continuity theorem. The proofs of claims (c) and (d) are similar to that of claim (b).
\end{proof}

\vspace{-0.5cm}\paragraph{Comparison with other point processes.}
We are not aware of an earlier work on a point process with only mirror-type interactions such as \eqref{new point process intro}. There is however a vast literature on point processes with different types of interactions, such as
\begin{itemize}
\item[(a)] \vspace{-0.15cm} the C$\beta$E, given by $\sim\prod_{j < k}|e^{i\theta_{j}}-e^{i\theta_{k}}|^{\beta}\prod_{j=1}^{n} d\theta_{j}$,
\item[(b)] \vspace{-0.15cm} point processes on the unit circle with Riesz pairwise interactions,
\item[(c)] \vspace{-0.15cm} the point process on the unit circle given by $\sim\prod_{j < k} |e^{i\theta_{j}}-e^{i\theta_{k}}|^{\beta} |e^{i\theta_{j}}-e^{-i\theta_{k}}|^{\beta} \prod_{j=1}^{n} d\theta_{j}$,
\item[(d)] \vspace{-0.2cm} the two-dimensional point process $\sim \hspace{-0.08cm} \prod_{j < k}\hspace{-0.08cm}|z_{j}-z_{k}|^{2}|z_{j}-\overline{z}_{k}|^{2}\hspace{-0.05cm}\prod_{j=1}^{n}\hspace{-0.08cm}|z_{j}-\overline{z}_{j}|^{2}e^{-N|z_{j}|^{2}}\hspace{-0.06cm}d^{2}z_{j}$,
\item[(e)] \vspace{-0.15cm} the point process on the unit circle given by $\sim\prod_{j < k} |e^{i\theta_{j}}+e^{i\theta_{k}}|^{\beta}\prod_{j=1}^{n} d\theta_{j}$. 
\end{itemize}
Other examples of point processes can be found in e.g. \cite{For, LS2017}. The five examples listed above share at least one common feature with \eqref{new point process intro}: (a), (b), (c) and (e) are point processes defined on the unit circle, and (c) and (d) are point processes involving the image points reflected across the real line. In sharp contrast with \eqref{new point process intro}, the C$\beta$E and the circular Riesz gas favor the configurations with equispaced points on the unit circle and the associated smooth linear statistics always have Gaussian fluctuations (except for constant test functions, in which case there are obviously no fluctuations), see e.g. \cite{Jo1988} for the C$\beta$E and \cite{Boursier} for the Riesz gas. Example (c) is discussed in \cite[Section 2.9]{For} (see also \cite{KN2004}) for its connection to random matrix theory. Here the mirror-type interactions $\prod_{j < k}|e^{i\theta_{j}}-e^{-i\theta_{k}}|^{\beta}$ are damped by the pairwise repulsion $\prod_{j < k}|e^{i\theta_{j}}-e^{i\theta_{k}}|^{\beta}$, and just like (a) and (b), the limiting empirical measure $\frac{1}{n}\sum_{j=1}^{n} \delta_{\theta_{j}}$ of point process (c) is the uniform measure on $(-\pi,\pi]$, see \cite[Proposition 3.6.3 and Exercise 4.1.1]{For}. Example (d) is an integrable Pfaffian point process on the plane introduced by Ginibre \cite{Ginibre} and later generalized in several works, see e.g. \cite{ABK2022} and the review \cite{BF2022}. Here too, the mirror-type interactions $\prod_{j < k}|z_{j}-\overline{z}_{k}|^{2}\prod_{j=1}^{n}|z_{j}-\overline{z}_{j}|^{2}$ are damped by the pairwise repulsion $\prod_{j < k}|z_{j}-z_{k}|^{2}$. The overall effect is that the repulsion between the points and the real axis is only visible on local scales, see e.g. \cite[Figure 1(b)]{ABK2022} (this is in sharp contrast with \eqref{new point process intro}, see Theorem \ref{thm:prob}).

\medskip As mentioned at the beginning of the introduction, the point process (e) with antipodal interactions is considered in \cite{C ReflectionPoint}. In contrast with (a), (b), (c) and (d), and just like \eqref{new point process intro}, the empirical measure $\mu_{n}^{e}:=\frac{1}{n}\sum_{j=1}^{n}\delta_{\theta_{j}}$ of (e) has no deterministic limit as $n \to \infty$.  In fact, it is proved in \cite{C ReflectionPoint} that if $g:\mathbb{R}\to \mathbb{R}$ is $2\pi$-periodic and $C^{1,q}$ for some $q\in (0,1]$, then
\begin{align}\label{Uniform one cut}
\int_{(-\pi,\pi]} g(x) d\mu_{n}^{e}(x) \oset{\mathrm{law}}{\underset{n\to \infty}{\xrightarrow{\hspace*{0.85cm}}}} \int_{(-\pi,\pi]} g(x) d\mu^{e}(x),
\end{align}
where $\mu^{e} = \delta_{U}$ and $U \sim \mathrm{Uniform}(-\pi,\pi]$. The convergence in \eqref{Uniform one cut} implies that the leading order fluctuations of the smooth linear statistics $\sum_{j=1}^{n}g(\theta_{j})$ of (e) are of order $n$ and given by $g(U)n$. It is also proved in \cite{C ReflectionPoint} that, as $n\to\infty$,
\begin{align}\label{antipodal asymptotics}
\sum_{j=1}^{n} g(\theta_{j}) = n \, g(U) + \sqrt{n} \; N_{U} + o(\sqrt{n}), \qquad \mbox{where } N_{U}\sim \mathcal{N}_{\R}(0,\tfrac{4g'(U)^{2}}{\beta}).
\end{align}
For the precise sense in which \eqref{antipodal asymptotics} holds, see \cite{C ReflectionPoint}. It is interesting to compare \eqref{antipodal asymptotics} with \eqref{lol6}. In particular, for the point process (e), the subleading fluctuations of $\sum_{j=1}^{n}g(\theta_{j})$ are of order $\sqrt{n}$ and given by $N_{U}\sqrt{n}$, i.e. by a Gaussian random variable with a random variance. In contrast, the smooth linear statistics of \eqref{new point process intro} have no subleading fluctuations of order $\sqrt{n}$, see \eqref{lol6}.

\paragraph{Concluding remarks and open problems.} In this paper, we investigated the smooth linear statistics of \eqref{new point process intro}. More questions can be asked about this point process. For example:
\begin{itemize}
\item \textbf{Counting statistics.} What is the asymptotic behavior of $\sum_{j=1}^{n}g(\theta_{j})$ if $g$ is not smooth in neighborhoods of $\frac{\pi}{2}$ and $-\frac{\pi}{2}$? Let $\mathcal{I}\subset (-\pi,\pi]$ be an interval (possibly depending on $n$). A particular test function $g$ of interest is
\begin{align*}
g(\theta) = \begin{cases}
1, & \mbox{if } \theta \in \mathcal{I}, \\
0, & \mbox{if } \theta \in (-\pi,\pi] \setminus \mathcal{I}.
\end{cases}
\end{align*}
In this case the random variable $\sum_{j=1}^{n}g(\theta_{j})$ counts the number of points lying in $\mathcal{I}$. 
\item \textbf{Gap probabilities.} What is the asymptotic behavior of $\mathbb{P}(\#\{\theta_{j} \in \mathcal{I}\} = 0)$? This gap probability can be rewritten as $\mathbb{E}\big[\exp\big(\sum_{j=1}^{n} g(\theta_{j}) \big) \big]$ with
\begin{align*}
g(\theta) = \begin{cases}
-\infty, & \mbox{if } \theta \in \mathcal{I}, \\
0, & \mbox{if } \theta \in (-\pi,\pi] \setminus \mathcal{I}.
\end{cases}
\end{align*}
\item \textbf{Third order asymptotics for the smooth statistics.} Theorem \ref{thm:moment generating function} provides second order asymptotics for $\mathbb{E}\big[\exp(it\sum_{j=1}^{n} g(\theta_{j})) \big]$ when $g$ is $C^{2,q}$ in neighborhoods of $\frac{\pi}{2}$ and $-\frac{\pi}{2}$. If we assume $g \in C^{3,q}$, what is the third term? This is relevant in view of Remark \ref{remark:other rv}.
\item \textbf{Small and large $\beta$.} The results of this paper are valid for fixed $\beta>0$. What if $\beta$ depends on $n$ such that either $\beta \to 0$ (more randomness) or $\beta \to \infty$ (less randomness)? The asymptotic formula \eqref{asymp for moment generating function} suggests that a critical transition occurs when $\beta \asymp n^{-1}$.
\end{itemize}
All of the above questions are probably difficult and will require new techniques.

\paragraph{Outline.} The general strategy to prove Theorems \ref{thm:prob} and \ref{thm:main} relies on some ideas from \cite{McKayWormald}, and can roughly be summarized as follows:
\begin{enumerate}
\item\label{step1} The first step is to show that the main contribution to the $n$-fold integral $I(f)$ (defined in \eqref{main integral}) comes from small neighborhoods of size $n^{-\frac{1}{2}+\epsilon}$ around $(\frac{\pi}{2},\ldots,\frac{\pi}{2})$ and $(-\frac{\pi}{2},\ldots,-\frac{\pi}{2})$. More precisely, we will show that 
\begin{align}\label{lol9}
& I(f) = J_{3}(0) \big( 1+ \bigO(e^{-cn^{2\epsilon}}) \big) + \tilde{J}_{3}(0) \big( 1+ \bigO(e^{-cn^{2\epsilon}}) \big),
\end{align}
as $n\to \infty$, where
\begin{align*}
& J_{3}(0) := \int_{\frac{\pi}{2}-n^{\epsilon-\frac{1}{2}}}^{\frac{\pi}{2}+n^{\epsilon-\frac{1}{2}}} \dots \int_{\frac{\pi}{2}-n^{\epsilon-\frac{1}{2}}}^{\frac{\pi}{2}+n^{\epsilon-\frac{1}{2}}} \prod_{1 \leq j < k \leq n} |e^{i\theta_{j}}-e^{-i\theta_{k}}|^{\beta}\prod_{j=1}^{n} e^{f(\theta_{j})} d\theta_{j},  \\
& \tilde{J}_{3}(0) := \int_{-\frac{\pi}{2}-n^{\epsilon-\frac{1}{2}}}^{-\frac{\pi}{2}+n^{\epsilon-\frac{1}{2}}} \dots \int_{-\frac{\pi}{2}-n^{\epsilon-\frac{1}{2}}}^{-\frac{\pi}{2}+n^{\epsilon-\frac{1}{2}}} \prod_{1 \leq j < k \leq n} |e^{i\theta_{j}}-e^{-i\theta_{k}}|^{\beta}\prod_{j=1}^{n} e^{f(\theta_{j})} d\theta_{j}. \nonumber
\end{align*}
\item\label{step2} The second step is to expand the integrand of $J_{3}(0)$ around $(\frac{\pi}{2},\ldots,\frac{\pi}{2})$, see \eqref{lol7} (and similarly for $\tilde{J}_{3}(0)$). One is then left to analyze an integral of the form
\begin{align}
& \int_{U_{n}(n^{-\frac{1}{2}+\epsilon})} \exp \bigg( \alpha_{1} \sum_{j<k}(\eta_{j}+\eta_{k})^{2} + \alpha_{2} \sum_{j<k}(\eta_{j}+\eta_{k})^{4} + \bigO\bigg(\sum_{j<k}(\eta_{j}+\eta_{k})^{6}\bigg) \nonumber \\
& \hspace{5.05cm} + \alpha_{3} \sum_{j=1}^{n}\eta_{j} + \alpha_{4} \sum_{j=1}^{n} \eta_{j}^{2} + \bigO\bigg(\sum_{j=1}^{n} \eta_{j}^{2+q}\bigg) \bigg) \prod_{j=1}^{n} d\eta_{j}, \label{lol12}
\end{align}
where $U_{n}(t) := \{\boldsymbol{x} \in \mathbb{R}^{n}: |x_{i}| \leq t, i=1,\ldots,n\}$. Since the $\eta_{j}$ are small, the leading order term as $n\to\infty$ in the above integral comes from the linear and quadratic terms in the exponential. Note however that the quadratic terms do not decouple, because of the double sum $\sum_{j<k}(\eta_{j}+\eta_{k})^{2}$. 
\item\label{step3} The third step overcomes this difficulty by applying a change of variables $\boldsymbol{\eta}=T\boldsymbol{y}$, using an operator $T$ which decouples the quadratic terms. We then obtain an integral of the form
\begin{align*}
\int_{T^{-1}U_{n}(n^{-\frac{1}{2}+\epsilon})} h(\boldsymbol{y})\prod_{j=1}^{n} dy_{j},
\end{align*}
where $h$ is as in Lemma \ref{eta to y transformation}. 
\item The fourth step consists in finding the large $n$ asymptotics of 
\begin{align}\label{lol8}
\int_{U_{n}(n^{-\frac{1}{2}+\epsilon})} h(\boldsymbol{y})\prod_{j=1}^{n} dy_{j}.
\end{align}
This is done in Lemma \ref{eta to y transformation}.
\item In the fifth step, we use the inclusions 
\begin{align}\label{inclusion}
U_{n}(\tfrac{n^{-\frac{1}{2}+\epsilon}}{1+\gamma}) \subseteq T^{-1}U_{n}(n^{-\frac{1}{2}+\epsilon}) \subseteq U_{n}(\tfrac{n^{-\frac{1}{2}+\epsilon}}{1-\gamma}),
\end{align}
where $\gamma \approx 1-\frac{1}{\sqrt{2}}$, to show that
\begin{align*}
\int_{T^{-1}U_{n}(n^{-\frac{1}{2}+\epsilon})\setminus U_{n}(n^{-\frac{1}{2}+\epsilon})} h(\boldsymbol{y})\prod_{j=1}^{n} dy_{j}
\end{align*}
is negligible in comparison to \eqref{lol8}.
\item The final step consists in substituting the asymptotics of $J_{3}(0)$ and $\tilde{J}_{3}(0)$ in \eqref{lol9}.
\end{enumerate}

\begin{remark}\label{remark:comparison antipodal} (comparison with \cite{C ReflectionPoint})

\noindent We already discussed above the differences in the asymptotic behavior of the linear statistics associated with the two point processes \eqref{new point process intro} and \eqref{AntipodalType} (see \eqref{lol6} and \eqref{antipodal asymptotics}). In this remark, we focus the discussion on the technical differences at the level of the proof.

\noindent In \cite{C ReflectionPoint}, the smooth linear statistics of the point process \eqref{AntipodalType} with antipodal interactions are analyzed using techniques from \cite{McKay, IsaevMcKay}. The general strategy outlined above for \eqref{new point process intro} also applies for \eqref{AntipodalType}, but there are five important differences, which we mention here:
\begin{enumerate}
\item[(i)] In this paper, the smooth linear statistics of \eqref{new point process intro} are analyzed by establishing a precise asymptotic formula for the characteristic function
\begin{align}\label{lol10}
\mathbb{E}\bigg[e^{it\sum_{j=1}^{n} g(\theta_{j})} \bigg],
\end{align}
see Theorem \ref{thm:moment generating function}. Instead of that, one could also have considered the moment generating function
\begin{align}\label{lol11}
\mathbb{E}\bigg[e^{t\sum_{j=1}^{n} g(\theta_{j})} \bigg].
\end{align}
However, it turns out that obtaining rigorously the large $n$ asymptotics of \eqref{lol11} is much more challenging than that of \eqref{lol10}, especially for the subleading term. One way to see this is to note that condition \eqref{technical condition} holds for $f=itg$ and $t\in \R$, but may fail for $f=tg$ if $|t|$ is too large. 

When analyzing the point process \eqref{AntipodalType}, one faces the important extra difficulty that many error terms in the analysis become out of control if the integrand is complex-valued. Hence, in \cite{C ReflectionPoint}, the analysis of the smooth linear statistics proceeds via the moment generating function instead of the characteristic function, and the proof boils down to obtaining the large $n$ asymptotics of
\begin{align}\label{main integral anti}
\int_{-\pi}^{\pi} \dots \int_{-\pi}^{\pi}\prod_{1 \leq j < k \leq n} |e^{i\theta_{j}}+e^{i\theta_{k}}|^{\beta}\prod_{j=1}^{n} e^{\frac{t}{\sqrt{n}}g(\theta_{j})} d\theta_{j},
\end{align}
where $g$ is real-valued.
\item[(ii)] The first step in \cite{C ReflectionPoint} consists in showing that the main contribution to \eqref{main integral anti} comes from a small neighborhood of the set $\{(\theta,\ldots,\theta):\theta \in (-\pi,\pi]\}$. Since this set is of dimension one, this step is harder than its counterpart \ref{step1}.
\item[(iii)] The second step consists in expanding the integrand, in a similar way as in \ref{step2}. The integral that must then be analyzed is of the form
\begin{align}
I_{1} & = 2^{\beta \frac{n(n-1)}{2}}\exp \Big( \bigO(n^{-\frac{q}{2}+(1+q) \epsilon}+n^{-1+6\epsilon}) \Big) \int_{-\pi}^{\pi} e^{t\sqrt{n} \,  g(\theta_{n})} I_{1}'(\theta_{n}) d\theta_{n}, \label{lol8 2} \\
I_{1}'(\theta_{n}) & := \int_{\theta_{n}+U_{n-1}(n^{-\frac{1}{2}+\epsilon})} \exp \bigg( - \frac{\beta}{8}\sum_{1 \leq j<k \leq n}(\theta_{j}-\theta_{k})^{2} - \frac{\beta}{192}\sum_{1 \leq j<k \leq n}(\theta_{j}-\theta_{k})^{4} \nonumber \\
& \hspace{7cm} + \frac{tg'(\theta_{n})}{\sqrt{n}}\sum_{j=1}^{n-1}(\theta_{j}-\theta_{n}) \bigg) \prod_{j=1}^{n-1} d\theta_{j}, \nonumber
\end{align}
see also \cite[eq (3.4)]{C ReflectionPoint}. Note that the error terms in \eqref{lol8 2} appear in front of the integral. This is needed for the rest of the proof in \cite{C ReflectionPoint}, and can only be justified if the integrand is real-valued. This also explains why the moment generating function is considered in \cite{C ReflectionPoint} instead of the characteristic function.

This difficulty is not present in this paper when analyzing the point process with mirror-type interactions: in \eqref{lol12}, it is not problematic that the error terms stay inside the integral. This allows us to study the characteristic function, which involves a complex-valued integrand. 
\item[(iv)] The third step consists in applying a change of variables $\boldsymbol{y}=\tilde{T}\boldsymbol{\theta}$ that decouples the quadratic term $\sum_{1 \leq j<k \leq n}(\theta_{j}-\theta_{k})^{2}$. This step is similar in spirit to \ref{step3}, with however yet another important difference: 
\begin{itemize}
\item when analyzing the point process with mirror-type interactions, the operator $T$ of step \ref{step3} is well-behaved, in the sense that $\det T = 1-\gamma \approx \frac{1}{\sqrt{2}}$;
\item when analyzing the point process with antipodal interactions, the operator $\tilde{T}$ becomes singular as $n\to + \infty$, in the sense that $\det \tilde{T} = n^{-\frac{1}{2}}$.
\end{itemize}
This difference is also related to the fact that the set $\{(\theta,\ldots,\theta):\theta \in (-\pi,\pi]\}$ is of dimension one, i.e. that the ``saddle points" of \eqref{main integral anti} are not isolated from one another.
\item[(v)] After applying the change of variables $\boldsymbol{y}=\tilde{T}\boldsymbol{\theta}$, we are left to evaluate an integral over the domain $V_{1}=\tilde{T}(U_{n-1}(n^{-\frac{1}{2}+\epsilon}))$. For the mirror-type point process, the set $T^{-1}U_{n}(n^{-\frac{1}{2}+\epsilon})$ obeys the two inclusions \eqref{inclusion}. In contrast, the domain $V_{1}$ is more complicated and only satisfies $V_{1} \subset U_{n-1}(2n^{-\frac{1}{2}+\epsilon})$. This produces some technical challenges.
\end{enumerate}
For all the above reasons, analyzing the point process \eqref{AntipodalType} with antipodal interactions is more challenging than analyzing the point process \eqref{new point process intro}. 
\end{remark}

\section{Preliminaries}\label{section:prelim}

In this section, we introduce some notation and record some results from \cite{McKayWormald}. These results will be used in Section \ref{section:proof} to obtain large $n$ asymptotics for $I(f)$.
\begin{lemma}\label{lemma:exp bound for cos}(Special case of \cite[Lemma 1]{McKayWormald}.) For all $x\in \mathbb{R}$,
\begin{align*}
|\tfrac{1+e^{ix}}{2}| = (\tfrac{1+\cos x}{2})^{\frac{1}{2}} = |\cos \tfrac{x}{2}| \leq \exp (-\tfrac{x^{2}}{8}+\tfrac{x^{4}	}{96}).
\end{align*}
\end{lemma}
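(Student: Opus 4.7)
The two equalities on the left are elementary: $|1+e^{ix}|^{2}=(1+\cos x)^{2}+\sin^{2}x=2+2\cos x$, and the half-angle identity $\cos^{2}(x/2)=(1+\cos x)/2$ together yield both identities at once. So everything reduces to proving the inequality $|\cos(x/2)|\leq \exp(-x^{2}/8+x^{4}/96)$, or equivalently, after substituting $y=x/2$ and taking logarithms,
\begin{align*}
\log|\cos y|\leq -\tfrac{y^{2}}{2}+\tfrac{y^{4}}{6}\qquad\text{for all }y\in\mathbb{R}.
\end{align*}
Both sides are even, so I restrict to $y\geq 0$.

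The plan is to split into three regions. \emph{Region 1}: $y\geq\sqrt{3}$. Here the right-hand side equals $y^{2}(y^{2}-3)/6\geq 0$, while $\log|\cos y|\leq 0$, so the inequality is immediate. \emph{Region 2}: $0\leq y<\pi/2$. On this interval $\cos y>0$, so I define $F(y):=-y^{2}/2+y^{4}/6-\log\cos y$. A direct computation gives
\begin{align*}
F'(y)=-y+\tfrac{2y^{3}}{3}+\tan y,\qquad F''(y)=-1+2y^{2}+\sec^{2}y=2y^{2}+\tan^{2}y\geq 0,
\end{align*}
where the last equality uses $\sec^{2}y-1=\tan^{2}y$. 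Thus $F'$ is nondecreasing on $[0,\pi/2)$; since $F(0)=F'(0)=0$, monotonicity forces $F'\geq 0$, and then $F\geq 0$, on $[0,\pi/2)$. This is the cleanest piece of the argument.

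\emph{Region 3}: $\pi/2\leq y\leq \sqrt{3}$. Here $\log|\cos y|$ is very negative while the polynomial stays moderate. Indeed, $\cos$ is decreasing on $[0,\pi]$, so $|\cos y|\leq |\cos\sqrt{3}|$, and a short numerical estimate gives $\log|\cos\sqrt{3}|<-1.8$. On the other hand, the polynomial $-y^{2}/2+y^{4}/6$ is increasing on $[\sqrt{3/2},\infty)\supset[\pi/2,\sqrt{3}]$, so its minimum on this subinterval is attained at $y=\pi/2$ and equals $-\pi^{2}/8+\pi^{4}/96>-1/4$. Combining these two crude bounds finishes the proof. The only subtlety is bookkeeping around the singularity at $y=\pi/2$, but since the right-hand side stays finite while $\log|\cos y|\to -\infty$ there, the inequality actually holds trivially in a neighborhood of $\pi/2$; the point of Region 3 is just to cover the gap between $\pi/2$ and $\sqrt{3}$. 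The main obstacle, if one wants an entirely elementary argument, is resisting the temptation to overextend the convexity argument of Region 2 past $y=\pi/2$—the clean identity $\sec^{2}y-1=\tan^{2}y$ no longer helps there, and the numerical split is by far the simplest way to finish.
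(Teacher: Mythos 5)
Your proof is correct. The paper itself gives no argument for this lemma---it simply cites \cite[Lemma 1]{McKayWormald}---so you have supplied a self-contained elementary proof where the paper relies on an external reference. Your reduction is right: with $y=x/2$ the exponent $-x^{2}/8+x^{4}/96$ becomes $-y^{2}/2+y^{4}/6$, and the three-region split covers all of $[0,\infty)$. Region 2 is the essential content and is clean: $F''(y)=2y^{2}+\tan^{2}y\geq 0$ together with $F(0)=F'(0)=0$ gives $F\geq 0$ on $[0,\pi/2)$. Region 1 is immediate since $y^{2}(y^{2}-3)/6\geq 0$ for $y\geq\sqrt{3}$, and in Region 3 your two numerical bounds check out ($-\pi^{2}/8+\pi^{4}/96\approx-0.219$ and $\log|\cos\sqrt{3}|\approx-1.83$; the latter can be made rigorous via $|\cos\sqrt{3}|=\sin(\sqrt{3}-\pi/2)<\sqrt{3}-\pi/2<e^{-1.8}$). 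The only cosmetic remark is that at points where $\cos y=0$ the logarithmic formulation should be read as the original multiplicative inequality $0\leq\exp(\cdots)$, which you implicitly acknowledge.
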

\begin{lemma}\label{lemma:inequality 2 and 4}(\cite[Eq (3.3)]{McKayWormald})
Let $\ell \in \mathbb{N}_{>0}$. For all $x_{1},\ldots,x_{\ell}\in \mathbb{R}$,
\begin{align*}
\sum_{1\leq j < k \leq \ell}(x_{j}+x_{k})^{2} \geq (\ell-2) \sum_{j=1}^{\ell}x_{j}^{2}, \qquad \sum_{1 \leq j < k \leq \ell}(x_{j}+x_{k})^{4} \leq 8(\ell-1) \sum_{j=1}^{\ell}x_{j}^{4}.
\end{align*}
\end{lemma}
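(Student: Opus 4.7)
Both inequalities are purely algebraic and I would prove each by a direct expansion followed by a very short rearrangement; no subtle analysis is required, so the main ``obstacle'' is just to verify that the combinatorial counting produces the exact constants $\ell-2$ and $8(\ell-1)$.

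For the first inequality, the plan is to expand $(x_{j}+x_{k})^{2}=x_{j}^{2}+2x_{j}x_{k}+x_{k}^{2}$ and to observe that each $x_{j}^{2}$ appears in $\ell-1$ pairs, so
\begin{align*}
\sum_{1\leq j<k\leq\ell}(x_{j}+x_{k})^{2}=(\ell-1)\sum_{j=1}^{\ell}x_{j}^{2}+2\sum_{1\leq j<k\leq\ell}x_{j}x_{k}.
\end{align*}
Then I would use the identity $2\sum_{j<k}x_{j}x_{k}=\bigl(\sum_{j}x_{j}\bigr)^{2}-\sum_{j}x_{j}^{2}$ to rewrite the right-hand side as $(\ell-2)\sum_{j}x_{j}^{2}+\bigl(\sum_{j}x_{j}\bigr)^{2}$, and the conclusion is immediate since $\bigl(\sum_{j}x_{j}\bigr)^{2}\geq 0$.

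For the second inequality, my plan is to invoke the convexity of $t\mapsto t^{4}$ (equivalently, Jensen with two points), which yields the pointwise bound
\begin{align*}
\Bigl(\tfrac{x_{j}+x_{k}}{2}\Bigr)^{4}\leq \tfrac{x_{j}^{4}+x_{k}^{4}}{2}, \qquad \text{i.e.}\qquad (x_{j}+x_{k})^{4}\leq 8\,(x_{j}^{4}+x_{k}^{4}).
\end{align*}
Summing this over the $\binom{\ell}{2}$ pairs and using again that each index $j$ appears in exactly $\ell-1$ pairs gives $\sum_{j<k}(x_{j}+x_{k})^{4}\leq 8(\ell-1)\sum_{j}x_{j}^{4}$, which is the stated inequality. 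The only point to double-check is that the constant $8$ in the convexity step is sharp enough to make the counting exact; the argument above shows this is the case, so there is no genuine difficulty.
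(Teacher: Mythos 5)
Your proof is correct: the first inequality follows from the expansion together with $2\sum_{j<k}x_jx_k=(\sum_j x_j)^2-\sum_j x_j^2$, and the second from the two-point convexity bound $(x_j+x_k)^4\leq 8(x_j^4+x_k^4)$ summed over pairs. The paper gives no proof of this lemma (it simply cites \cite[Eq.\ (3.3)]{McKayWormald}), and your argument is the standard one establishing exactly the stated constants.
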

Following \cite[Section 2]{McKayWormald}, we also introduce the following quantities:
\begin{align*}
& \gamma = 1 - \sqrt{\frac{n-2}{2(n-1)}}, \quad J_{n} = \mbox{the } n \times n \mbox{ matrix of all ones}, \quad I_{n} = \mbox{the } n \times n \mbox{ identity matrix}, \\
& T = I_{n} - \gamma J_{n}/n, \qquad \boldsymbol{y},\boldsymbol{\eta} \in \mathbb{R}^{n}, \qquad \boldsymbol{\eta} = T \boldsymbol{y}, \qquad \mu_{k} = \sum_{j=1}^{n}y_{j}^{k} \quad \mbox{for } k \geq 0, \\
& U_{n}(t) = \{\boldsymbol{x} \in \mathbb{R}^{n}: |x_{i}| \leq t, i=1,\ldots,n\} \quad \mbox{for } t \geq 0.
\end{align*}
\begin{lemma}\label{eta to y transformation}(\cite[Lemma 2]{McKayWormald})
\begin{align*}
& \mbox{(a)} \;\; \sum_{j=1}^{n} \eta_{j} = (1-\gamma)\mu_{1}, \qquad \sum_{j=1}^{n} \eta_{j}^{2} = \mu_{2}-\gamma(2-\gamma)\mu_{1}^{2}/n, \quad \sum_{1 \leq j < k \leq n}(\eta_{j}+\eta_{k})^{2} = (n-2)\mu_{2}, \\
& \;\;\; \sum_{1 \leq j < k \leq n}(\eta_{j}+\eta_{k})^{4} = (n-8)\mu_{4} + 3\mu_{2}^{2} + \big( 4(1-2\gamma)+32\gamma/n \big)\mu_{1}\mu_{3} - \big( 24\gamma(1-\gamma)/n + 48 \gamma^{2}/n^{2} \big)\mu_{1}^{2}\mu_{2} \\
& \hspace{3.2cm} + \big( 8\gamma^{2}(1-\gamma)(3-\gamma)/n^{2} + 8\gamma^{3}(4-\gamma)/n^{3} \big)\mu_{1}^{4}. \\
& \mbox{(b) } \det(I_{n}-sJ_{n}/n)=1-s \mbox{ for any } s. \\
& \mbox{(c) For any } t \geq 0, TU_{n}(t) \subseteq (1+\gamma)U_{n}(t) \mbox{ and } T^{-1}U_{n}(t) \subseteq (1-\gamma)^{-1}U_{n}(t).
\end{align*}
\end{lemma}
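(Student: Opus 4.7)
The plan is to treat the three parts separately, using throughout the explicit representation $\eta_{j} = y_{j} - c$ with $c := \gamma \mu_{1}/n$, which comes from $T = I_{n} - \gamma J_{n}/n$ together with $(J_{n}\boldsymbol{y})_{j} = \mu_{1}$.

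I would dispose of (b) first, since it is purely spectral: $J_{n}/n$ has rank one with eigenvalue $1$ on the all-ones vector and $0$ on its orthogonal complement, so $I_{n} - sJ_{n}/n$ has eigenvalues $1-s, 1, \ldots, 1$, whose product is $1-s$. For (c), since $|y_{j}|\leq t$ forces $|\mu_{1}|/n \leq t$, the triangle inequality gives $|\eta_{j}| \leq (1+\gamma)t$ immediately. To handle $T^{-1}$, I would invert explicitly on the two invariant subspaces of $J_{n}/n$ (using $J_{n}^{2} = nJ_{n}$) to get
\[ T^{-1} = I_{n} + \frac{\gamma}{1-\gamma}\cdot\frac{J_{n}}{n}, \]
and then apply the same triangle-inequality estimate to $(T^{-1}\boldsymbol{\eta})_{j} = \eta_{j} + \tfrac{\gamma}{1-\gamma}\tfrac{1}{n}\sum_{k}\eta_{k}$ to obtain $|(T^{-1}\boldsymbol{\eta})_{j}| \leq (1-\gamma)^{-1}t$.

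For (a), the first two identities are immediate from expanding $\eta_{j} = y_{j}-c$:
\[ \sum_{j}\eta_{j} = \mu_{1}-nc = (1-\gamma)\mu_{1}, \qquad \sum_{j}\eta_{j}^{2} = \mu_{2} - 2c\mu_{1} + nc^{2} = \mu_{2} - \gamma(2-\gamma)\mu_{1}^{2}/n. \]
For the sum of $(\eta_{j}+\eta_{k})^{2}$ over pairs I would use the standard identity
\[ \sum_{1\leq j<k\leq n}(\eta_{j}+\eta_{k})^{2} = (n-2)\sum_{j}\eta_{j}^{2} + \bigg(\sum_{j}\eta_{j}\bigg)^{2}, \]
plug in the two formulas just obtained, and verify that with the stated choice $\gamma = 1 - \sqrt{(n-2)/(2(n-1))}$, equivalently $(1-\gamma)^{2} = (n-2)\gamma(2-\gamma)/n$, the coefficient of $\mu_{1}^{2}$ vanishes, leaving exactly $(n-2)\mu_{2}$. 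This is the one place where the particular value of $\gamma$ plays a role.

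The bulk of the work is the quartic identity. Expanding $(\eta_{j}+\eta_{k})^{4}$ by the binomial theorem and summing over $j<k$ gives the compact intermediate formula
\[ \sum_{1\leq j<k\leq n}(\eta_{j}+\eta_{k})^{4} = (n-8)S_{4} + 4S_{1}S_{3} + 3S_{2}^{2}, \qquad S_{p} := \sum_{j}\eta_{j}^{p}, \]
after which I would expand each $S_{p}$ via $\eta_{j} = y_{j}-c$ (so $S_{p} = \sum_{i=0}^{p}\binom{p}{i}(-c)^{p-i}\mu_{i}$) and collect like monomials $\mu_{4},\ \mu_{1}\mu_{3},\ \mu_{2}^{2},\ \mu_{1}^{2}\mu_{2},\ \mu_{1}^{4}$ to read off the coefficients in the statement. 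In contrast with the quadratic case, no cancellation via the defining equation for $\gamma$ is required here; the identity is a pure algebraic identity in $(\gamma,n,\mu_{1},\ldots,\mu_{4})$. The main obstacle is precisely this bookkeeping: the coefficients of $\mu_{1}^{2}\mu_{2}$ and $\mu_{1}^{4}$ pick up several contributions at orders $n^{-1}, n^{-2}, n^{-3}$ with different polynomial expressions in $\gamma$, and assembling them into the compact form stated requires careful arithmetic—routine but error-prone, which is why I would do this step last and cross-check the low-order terms (e.g.\ by evaluating at $\mu_{1}=0$, where only the $(n-8)\mu_{4}+3\mu_{2}^{2}$ piece survives).
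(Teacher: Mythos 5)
Your proposal is correct: I checked the coefficient collection in the quartic identity (the $\mu_1^2\mu_2$ coefficient assembles to $-24\gamma(1-\gamma)/n-48\gamma^2/n^2$ and the $\mu_1^4$ coefficient to $8\gamma^2(1-\gamma)(3-\gamma)/n^2+8\gamma^3(4-\gamma)/n^3$ exactly as stated), and the use of $(1-\gamma)^2=(n-2)\gamma(2-\gamma)/n$ to kill the $\mu_1^2$ term in the quadratic identity is the right observation. The paper gives no proof of its own here — it cites \cite[Lemma 2]{McKayWormald} — and your direct expansion via $\eta_j=y_j-\gamma\mu_1/n$ is the standard verification of that result.
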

The following lemma is a minor extension of \cite[Lemma 3]{McKayWormald} (see also \cite[Section 4]{IsaevMcKay} for similar theorems).
\begin{lemma}\label{lemma:ABCD}
Let $a=9$, $\epsilon=\epsilon(n)$ and $\epsilon'=\epsilon'(n)$ be such that $0<\epsilon'<2\epsilon<\frac{1}{a}$. Let $A=A(n)$ be a bounded complex-valued function such that $\im A=\bigO(n^{-1})$ and $\re A \geq n^{-\epsilon'}$ for sufficiently large $n$. Let $B=B(n),C=C(n),\ldots,K=K(n)$ be complex valued functions such that the ratios $B/A,C/A,\ldots, K/A$ are bounded. Suppose that $\delta>0$, $0 < \Delta < \frac{1}{4}-\frac{1}{2}\epsilon$, and that
\begin{align*}
h(\boldsymbol{y}) = \exp \big( & -An\mu_{2} + Bn\mu_{3} + C\mu_{1}\mu_{2} + D \mu_{1}^{3}/n + E n \mu_{4} + F\mu_{2}^{2} \\
& + G \mu_{1}\mu_{3} + H \mu_{1}^{2}\mu_{2}/n + I \mu_{1}^{4}/n^{2} + J \mu_{1} + K \mu_{1}^{2}/n + \bigO(n^{-\delta}) \big)
\end{align*}
is integrable for $\boldsymbol{y} \in U_{n}(n^{-\frac{1}{2}+\epsilon})$. Then, provided the error term converges to zero,
\begin{align*}
\int_{U_{n}(n^{-\frac{1}{2}+\epsilon})} h(\boldsymbol{y})\prod_{j=1}^{n} dy_{j} = \bigg( \frac{\pi}{An} \bigg)^{\frac{n}{2}} \exp \bigg( & \frac{J^{2}}{4A} + \frac{3E+F+(C+3B)J}{4A^{2}}+\frac{15B^{2}+6BC+C^{2}}{16A^{3}} \\
& + \bigO\big( (n^{-\frac{1}{2}+a\epsilon}+n^{-\delta})Z + n^{-1+12\epsilon} + A^{-1} n^{-\Delta} \big) \bigg),
\end{align*}
where 
\begin{align*}
Z = \exp \bigg( \frac{15 \, \im(B)^{2}+6\, \im(B)(\im(C) + 2 \,\re(A) \, \im(J))+(\im(C)+2\,\re(A)\, \im(J))^{2}}{16\,\re(A)^{3}} \bigg).
\end{align*}
Furthermore, if $D(n) \equiv 0$, then the statement holds with $a=7$.\footnote{Even for $K\equiv 0$ the statement only holds for $a=9$ (or $a=7$ if $D(n) \equiv 0$). This lemma is stated in \cite{McKayWormald} for $K \equiv 0$ and $a=6$. However, it seems to us that there is a small typo in \cite{McKayWormald} and that \cite[Eq (2.2)]{McKayWormald} only holds for $\eta = \frac{3}{2}-9\epsilon$ (or $\eta = \frac{3}{2}-7\epsilon$ if $D(n) \equiv 0$).}
\end{lemma}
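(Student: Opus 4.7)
The plan is to adapt the proof of \cite[Lemma 3]{McKayWormald} almost verbatim, modifying it only to accommodate the additional linear term $J\mu_{1}$ and quadratic term $K\mu_{1}^{2}/n$ that are absent in the original. The quadratic form $-An\mu_{2}$ endows each coordinate with Gaussian weight of variance $(2An)^{-1}$, so typical $y_{j}$ are of order $(An)^{-\frac{1}{2}}$ and the truncation region $U_{n}(n^{-\frac{1}{2}+\epsilon})$ captures all but exponentially small mass. The hypothesis $\re A \geq n^{-\epsilon'}$ ensures both decay of the Gaussian and boundedness of the Wick moments used below. I would begin by completing the square in $-An\mu_{2}+J\mu_{1}$ via the shift $z_{j}=y_{j}-\alpha$ with $\alpha=J/(2An)$, which immediately produces the constant $e^{J^{2}/(4A)}$ and replaces the integration region by a $\bigO(n^{-1})$-translated copy of $U_{n}(n^{-\frac{1}{2}+\epsilon})$, harmless inside the estimates.

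After this shift I would expand each $\mu_{k}=\sum(z_{j}+\alpha)^{k}$ by the binomial theorem and regroup the integrand as $h=e^{-An\sum z_{j}^{2}+J^{2}/(4A)}\,e^{\mathrm{Pert}(z)}$. Expanding $e^{\mathrm{Pert}}=1+\mathrm{Pert}+\tfrac{1}{2}\mathrm{Pert}^{2}+\cdots$ and integrating term by term against the Gaussian density using Wick's theorem, the nontrivial constants in the answer arise as follows. The single expectation $\langle En\mu_{4}+F\mu_{2}^{2}\rangle$ contributes $(3E+F)/(4A^{2})$; the Gaussian mean of $Bn\mu_{3}+C\mu_{1}\mu_{2}$ after the shift (nonzero only because $\alpha\neq 0$) contributes $(C+3B)J/(4A^{2})$; and the connected second moment $\tfrac{1}{2}\langle (Bn\mu_{3}+C\mu_{1}\mu_{2})^{2}\rangle_{c}$ contributes $(15B^{2}+6BC+C^{2})/(16A^{3})$. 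The remaining pieces — $D\mu_{1}^{3}/n$, $G\mu_{1}\mu_{3}$, $H\mu_{1}^{2}\mu_{2}/n$, $I\mu_{1}^{4}/n^{2}$, $K\mu_{1}^{2}/n$, the $\bigO(n^{-\delta})$ already present in the exponent, all higher-order Wick pairings, and the truncation tail — should be absorbed into the stated error $\bigO((n^{-\frac{1}{2}+a\epsilon}+n^{-\delta})Z+n^{-1+12\epsilon}+A^{-1}n^{-\Delta})$.

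The factor $Z$ is not an artifact but a genuine feature arising from the possibly nonzero imaginary parts of $A,B,C,J$: when these are nonzero, $\re(\mathrm{Pert})$ by itself does not control the integrand, and one must separate the real and imaginary fluctuations by Cauchy--Schwarz (or by diagonalising the quadratic-in-imaginary part of $\mathrm{Pert}$); the Gaussian integral over the imaginary fluctuations has size $Z$, which then combines multiplicatively with the pointwise error. The main obstacle will be the bookkeeping: one must match the scaling $|y_{j}|\leq n^{-\frac{1}{2}+\epsilon}$ against the size of each polynomial product $\mu_{k_{1}}\mu_{k_{2}}\cdots$ and verify that Taylor's remainder is dominated uniformly. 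It is precisely here that the numerical constant $a=9$ (versus $a=7$ when $D\equiv 0$) enters: the cubic invariant $D\mu_{1}^{3}/n$ has the worst dependence on the box size, and the footnote correction to \cite[Eq.~(2.2)]{McKayWormald} --- replacing the exponent $\eta$ by $\tfrac{3}{2}-a\epsilon$ --- is exactly what makes the argument close.
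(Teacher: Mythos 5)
Your sketch correctly attributes each constant in the answer ($J^{2}/(4A)$ to the linear term, $(3E+F)/(4A^{2})$ to the first moments of the even perturbations, $(C+3B)J/(4A^{2})$ to the cross terms, $(15B^{2}+6BC+C^{2})/(16A^{3})$ to the connected second moment of the odd cubic terms), and the general frame --- adapt \cite[Lemma 3]{McKayWormald} --- is the same as the paper's. But there is a genuine gap at the central step. You propose to write $h=e^{-An\sum z_{j}^{2}+J^{2}/(4A)}e^{\mathrm{Pert}(z)}$, expand $e^{\mathrm{Pert}}$ term by term, and declare that ``all higher-order Wick pairings \dots should be absorbed into the stated error.'' On the domain $U_{n}(n^{-\frac{1}{2}+\epsilon})$ the perturbation is \emph{not} uniformly small: for instance $|Bn\mu_{3}|$ can be as large as order $n^{\frac{1}{2}+3\epsilon}$ at the corners of the box, so the exponential series cannot be truncated pointwise and a naive term-by-term integration of the tail is not controlled by the Gaussian weight alone. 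Overcoming exactly this is the content of McKay and Wormald's proof: they construct explicit comparison functions $\psi_{m}(\boldsymbol{y})$ built from centered (``hatted'' and ``checked'') moments, containing the second-order Taylor terms of the odd perturbations, and bound the ratio of the two integrals by integrating one coordinate at a time. The paper's entire proof of this lemma consists of (i) citing that argument for $K\equiv 0$, (ii) writing down the modified $\psi_{m}$ needed when $K\not\equiv 0$ (the new summand $K\hat{\mu}_{1}^{2}/n$ and its interaction terms), and (iii) noting that the extension to complex $A$ with $\im A=\bigO(n^{-1})$ is routine. Your proposal never produces the modified comparison function, which is the only genuinely new item to be supplied.

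Two smaller inaccuracies: the term $J\mu_{1}$ is already present in \cite[Lemma 3]{McKayWormald}, so it requires no modification --- the only new term relative to that lemma is $K\mu_{1}^{2}/n$ (together with allowing $A$ complex); and your explanation of $Z$ via Cauchy--Schwarz over ``imaginary fluctuations'' is only heuristically in the right direction --- $Z$ is the worst-case modulus of the third-order cumulant contribution with $B,C,J$ replaced by their imaginary parts and $A$ by $\re A$, which is what one must pay because the expansion error is controlled relative to $|h|$ rather than to $h$ itself.
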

\begin{proof}
The proof for $K(n)\equiv 0$ is done in \cite[Proof of Lemma 3]{McKayWormald}. The case of non-zero $K$ only requires to modify $\psi_{m}(\boldsymbol{y})$ in \cite[Proof of Lemma 3]{McKayWormald} into
\begin{align*}
\psi_{m}(\boldsymbol{y}) = \exp \big( & -An\mu_{2} + E n \mu_{4} + F\mu_{2}^{2} + Bn\hat{\mu}_{3} + C\hat{\mu}_{1}\mu_{2} + J \hat{\mu}_{1} + D \hat{\mu}_{1}^{3}/n + G \hat{\mu}_{1}\hat{\mu}_{3}  \\
&  + H \hat{\mu}_{1}^{2}\mu_{2}/n + I \hat{\mu}_{1}^{4}/n^{2}  + K \hat{\mu}_{1}^{2}/n + \tfrac{1}{2}B^{2}n^{2}\check{\mu}_{6} + \tfrac{1}{2}(C\mu_{2}+J)^{2}\check{\mu}_{2} + B(C\mu_{2}+J)n\check{\mu}_{4} \\
& + \tfrac{9}{2}D^{2}\check{\mu}_{2}\hat{\mu}_{1}^{4}/n^{2} + (3BD\check{\mu}_{4} + 3(C\mu_{2}+J)D\check{\mu}_{2}/n)\hat{\mu}_{1}^{2}  \big).
\end{align*}
Also, in \cite{McKayWormald} this lemma is stated for real-valued $A$, but the extension to complex-valued $A$ with $\im A = \bigO(n^{-1})$ is straightforward.
\end{proof}

The following lemma was proved in \cite[Top of p. 572]{McKayWormald} for $k=0$. In the present paper, we will also use it for $k=1$.
\begin{lemma}\label{lemma:exp int over 2t}
If $k\in \N$ and $\delta \in (0, \frac{1}{4})$ are fixed and $t\in [\frac{m^{-\frac{1}{2}+\delta}}{2},\frac{\pi}{8}]$, then as $m \to \infty$,
\begin{align*}
& \int_{-2t}^{2t} x^{2k}\exp \big( -mx^{2}+\tfrac{2}{3}m(1+o(1))x^{4} \big)dx = \frac{\Gamma(k+\frac{1}{2})}{m^{k+\frac{1}{2}}} \big( 1+\bigO(m^{-1+4\delta}) \big),
\end{align*}
where $\Gamma(z) := \int_{0}^{+\infty} t^{z-1}e^{-t}dt$ is the standard Gamma function (above, the $o(1)$-term is assumed to be uniform for $x\in [-2t,2t]$). In particular,  for $k=0$ and $k=1$, this gives
\begin{align*}
& \int_{-2t}^{2t} \exp \big( -mx^{2}+\tfrac{2}{3}m(1+o(1))x^{4} \big)dx = \sqrt{\frac{\pi}{m}} \big( 1+\bigO(m^{-1+4\delta}) \big), \\
& \int_{-2t}^{2t} x^{2}\exp \big( -mx^{2}+\tfrac{2}{3}m(1+o(1))x^{4} \big)dx = \frac{\sqrt{\pi}}{2m^{3/2}} \big( 1+\bigO(m^{-1+4\delta}) \big).
\end{align*}
\end{lemma}
\begin{proof}
Since the maximum of $\exp \big( -mx^{2}+\tfrac{2}{3}m(1+o(1)) x^{4} + \frac{2k}{m}\log |x| \big)$ over $[-\frac{\pi}{4},\frac{\pi}{4}]\setminus [-m^{-\frac{1}{2}+\delta},m^{-\frac{1}{2}+\delta}]$ is attained at $x=\pm m^{-\frac{1}{2}+\delta}$ for all $m$ sufficiently large, we have
\begin{multline*}
\int_{[-2t,2t]\setminus [-m^{-\frac{1}{2}+\delta},m^{-\frac{1}{2}+\delta}]} x^{2k}\exp \big( -mx^{2}+\tfrac{2}{3}m(1+o(1))x^{4} \big)dx \\
\leq 4t \; m^{-k+2\delta k}e^{-m^{2\delta}+\bigO(m^{-1+4\delta})} = \bigO(e^{-\frac{1}{2}m^{2\delta}})
\end{multline*}
for all large enough $m$. On the other, by Taylor expanding in a neighborhood of $0$, we obtain
\begin{align*}
& \int_{-m^{-\frac{1}{2}+\delta}}^{m^{-\frac{1}{2}+\delta}} x^{2k}\exp \big( -mx^{2}+\tfrac{2}{3}m(1+o(1))x^{4} \big)dx = \big( 1+\bigO(m^{-1+4\delta}) \big) \int_{-m^{-\frac{1}{2}+\delta}}^{m^{-\frac{1}{2}+\delta}} x^{2k}e^{-mx^{2}}dx \\
& = \big( 1+\bigO(m^{-1+4\delta}) \big) \frac{1}{m^{k+\frac{1}{2}}} \int_{-m^{\delta}}^{m^{\delta}} y^{2k}e^{-y^{2}}dy = \big( 1+\bigO(m^{-1+4\delta}) \big) \frac{1}{m^{k+\frac{1}{2}}} \int_{-\infty}^{+\infty} y^{2k}e^{-y^{2}}dy
\end{align*}
as $m\to \infty$. Since $\int_{-\infty}^{+\infty} y^{2k}e^{-y^{2}}dy = \Gamma(k+\frac{1}{2})$, the claim follows.
\end{proof}

\section{Proof of Theorems \ref{thm:prob} and \ref{thm:main}}\label{section:proof}
We divide the proof into two parts: we will first prove \eqref{asymptotic If} and Theorem \ref{thm:prob}, and the text written below \eqref{asymptotic If} about the dependence of the error terms in $t$ when $\re f \equiv 0$ will be proved afterwards.

\subsection{Proof of \eqref{asymptotic If} and of Theorem \ref{thm:prob}}\label{subsection:general f}

The proof closely follows the ideas of \cite[Proof of Theorem 1]{McKayWormald}. For convenience, we first make the change of variables $\eta_{j} = \theta_{j}-\frac{\pi}{2}$ in \eqref{main integral}; this yields
\begin{align}\label{If after change of variables}
I(f) = \int_{-\pi}^{\pi} \dots \int_{-\pi}^{\pi}\prod_{1 \leq j < k \leq n} |e^{i\eta_{j}}+e^{-i\eta_{k}}|^{\beta}\prod_{j=1}^{n} e^{\mathsf{f}(\eta_{j})} d\eta_{j},
\end{align}
where $\mathsf{f}(\eta) := f(\eta + \frac{\pi}{2})$. We first show that the main contribution to $I(f)$ comes from the point configurations for which either all the $e^{i\eta_{j}}$ are close to $1$, or all the $e^{i\eta_{j}}$ are close to $-1$. Let $\tau=\frac{\pi}{8}$ and fix $\epsilon \in (0,\frac{1}{14})$. If $\mathsf{f}$ is not identically zero, then we also assume that $\epsilon < \frac{q}{2(2+q)}$. Consider the partition $(-\pi,\pi]^{n} = \mathcal{J}_{1} \sqcup \mathcal{J}_{1}^{c}$, with
\begin{align*}
& \mathcal{J}_{1} := \bigg\{\boldsymbol{\eta}=(\eta_{1},\dots,\eta_{n})\in (-\pi,\pi]^{n}: n_{0}n_{2} \geq n^{1+\epsilon} \mbox{ or } \binom{n_{1}}{2} \geq n^{1+\epsilon} \mbox{ or } \binom{n_{3}}{2} \geq n^{1+\epsilon} \bigg\},
\end{align*}
and where $n_{0}=n_{0}(\boldsymbol{\eta}),n_{1}=n_{1}(\boldsymbol{\eta}),n_{2}=n_{2}(\boldsymbol{\eta}),n_{3}=n_{3}(\boldsymbol{\eta})$ are the numbers of $\eta_{j}$ in the regions $[-\tau,\tau]$, $(\tau,\pi-\tau)$, $[\pi-\tau,\pi]\cup (-\pi,-\pi+\tau]$ and $(-\pi+\tau,-\tau)$, respectively. Define
\begin{align*}
J_{1} = \int_{\mathcal{J}_{1}} \prod_{1 \leq j < k \leq n} |e^{i\eta_{j}}+e^{-i\eta_{k}}|^{\beta}\prod_{j=1}^{n} e^{\mathsf{f}(\eta_{j})} d\eta_{j}.
\end{align*}
Using $|e^{i\eta_{j}}+e^{-i\eta_{k}}|^{\beta}=2^{\beta}\big|\cos \tfrac{\eta_{j}+\eta_{k}}{2}\big|^{\beta}$, we get
\begin{align}
|J_{1}| & \leq e^{n M(f)}(2\pi)^{n} 2^{\beta\frac{n(n-1)}{2}} \bigg[ (\cos \tau)^{\beta\frac{n_{1}(n_{1}-1)}{2}} + (\cos \tau)^{\beta\frac{n_{3}(n_{3}-1)}{2}} + (\cos \tau)^{\beta n_{0}n_{2}} \bigg] \nonumber \\
& \leq 3 \, e^{n M(f)}(2\pi)^{n} 2^{\beta\frac{n(n-1)}{2}} (\cos \tau)^{\beta n^{1+\epsilon}}. \label{bound on J1}
\end{align}
It remains to estimate the integral over $\mathcal{J}_{1}^{c}$, for which we have $n_{1} = \bigO(n^{\frac{1+\epsilon}{2}})$, $n_{3} = \bigO(n^{\frac{1+\epsilon}{2}})$, and either $n_{0} = \bigO(n^{\epsilon})$ or $n_{2} = \bigO(n^{\epsilon})$. For sufficiently large $n$, we can write $\mathcal{J}_{1}^{c} = \mathcal{J}_{2} \sqcup \tilde{\mathcal{J}}_{2}$, where 
\begin{align*}
& \mathcal{J}_{2} = \bigg\{\boldsymbol{\eta}\in \mathcal{J}_{1}^{c}:  n_{1} \leq 2n^{\frac{1+\epsilon}{2}} \mbox{ and } n_{3} \leq 2n^{\frac{1+\epsilon}{2}} \mbox{ and } n_{2} \leq  n^{2\epsilon} \bigg\}, \\
& \tilde{\mathcal{J}}_{2} = \bigg\{\boldsymbol{\eta}\in \mathcal{J}_{1}^{c}:  n_{1} \leq 2n^{\frac{1+\epsilon}{2}} \mbox{ and } n_{3} \leq 2n^{\frac{1+\epsilon}{2}} \mbox{ and } n_{0} \leq  n^{2\epsilon} \bigg\}.
\end{align*}
We first consider the $n$-fold integral over $\mathcal{J}_{2}$. Define $S_{0}=S_{0}(\boldsymbol{\eta})$, $S_{1}=S_{1}(\boldsymbol{\eta})$ and $S_{2}=S_{2}(\boldsymbol{\eta})$ by
\begin{align*}
& S_{0} = \{j : |\eta_{j}| \leq \tau\}, & & S_{1} = \{j : \tau < |\eta_{j}| \leq 2\tau\}, & & S_{2} = \{j: 2\tau<|\eta_{j}| \leq \pi\},
\end{align*}
and let $s_{0} = \# S_{0}$, $s_{1} = \# S_{1}$ and $s_{2} = \# S_{2}$. For $\boldsymbol{\eta} \in \mathcal{J}_{2}$, we note that $s_{0} = n_{0} \geq n-5n^{\frac{1+\epsilon}{2}}$ and $s_{1}+s_{2} = n_{1}+n_{2}+n_{3} \leq 5n^{\frac{1+\epsilon}{2}}$. Moreover, we have
\begin{align}\label{bounds}
\big|\cos \tfrac{\eta_{j}+\eta_{k}}{2}\big|^{\beta} \leq \begin{cases}
\exp(-\frac{\beta}{8}(\eta_{j}+\eta_{k})^{2} + \frac{\beta}{96}(\eta_{j}+\eta_{k})^{4}), & \mbox{if } j,k \in S_{0}\cup S_{1}, \\
(\cos \frac{\tau}{2})^{\beta}, & \mbox{if } j \in S_{0}, \; k \in S_{2}, \\
1, & 
\end{cases}
\end{align}
where for the top inequality we have used Lemma \ref{lemma:exp bound for cos}. Let $\alpha := -\beta\log \cos \frac{\tau}{2}$. Using \eqref{bounds} and Lemma \ref{lemma:inequality 2 and 4}, we infer that the modulus of the integrand in \eqref{If after change of variables} is bounded above for $\boldsymbol{\eta}\in\mathcal{J}_{2}$ by
\begin{align}
& 2^{\beta\frac{n(n-1)}{2}} \exp \bigg( -\frac{\beta}{8}\sum_{\substack{1 \leq j<k \leq n\\j,k \in S_{0}\cup S_{1}}}(\eta_{j}+\eta_{k})^{2} + \frac{\beta}{96}\sum_{\substack{1 \leq j<k \leq n\\j,k \in S_{0}\cup S_{1}}}(\eta_{j}+\eta_{k})^{4} - \alpha s_{0} s_{2} + \sum_{j=1}^{n}\re \mathsf{f}(\eta_{j}) \bigg) \label{a nice inequality} \\
& \leq e^{n M(f)}2^{\beta\frac{n(n-1)}{2}} \exp \bigg( -\frac{\beta}{8}(n-s_{2}-2)\sum_{j \in S_{0}\cup S_{1}} \eta_{j}^{2} + \frac{\beta}{12}(n-s_{2}-1)\sum_{j \in S_{0}\cup S_{1}}\eta_{j}^{4} - \alpha s_{2} \big( n - 5n^{\frac{1+\epsilon}{2}} \big) \bigg). \nonumber
\end{align}
Let $J_{2}(m_{2})$ be the contribution to \eqref{If after change of variables} from $\{\boldsymbol{\eta} \in \mathcal{J}_{2}:  s_{2}(\boldsymbol{\eta})=m_{2} \}$. Using the above inequality and Lemma \ref{lemma:exp int over 2t} (with $\delta=\frac{1}{6}$ and $k=0$), for all sufficiently large $n$ and $m_{2} \leq 5n^{\frac{1+\epsilon}{2}}$ we obtain
\begin{align*}
& |J_{2}(m_{2})| \leq \bigg| \binom{n}{m_{2}} \int_{|\eta_{1}|,\ldots,|\eta_{m_{2}}| \in (2\tau,\pi]}\int_{\substack{|\eta_{m_{2}+1}|,\ldots,|\eta_{n}|\leq 2\tau \\ s_{0}(\boldsymbol{\eta}) \geq n-5n^{\frac{1+\epsilon}{2}}}} \prod_{1 \leq j < k \leq n} |e^{i\eta_{j}}+e^{-i\eta_{k}}|^{\beta}\prod_{j=1}^{n} e^{\mathsf{f}(\eta_{j})} d\eta_{j} \bigg| \\
&  \leq e^{n M(f)} 2^{\beta\frac{n(n-1)}{2}} (2\pi-4\tau)^{m_{2}} e^{- \alpha m_{2} ( n - 5n^{\frac{1+\epsilon}{2}} )}  \binom{n}{m_{2}} \sqrt{\frac{\pi}{\frac{\beta}{8}(n-m_{2}-2)}}^{n-m_{2}} \big( 1+\bigO(n^{-\frac{1}{3}}) \big)^{n-m_{2}} \\
& \leq e^{n M(f)} 2^{\beta\frac{n(n-1)}{2}} (2\pi)^{m_{2}} e^{- \alpha m_{2} ( n - 5n^{\frac{1+\epsilon}{2}} )} n^{m_{2}} \bigg( \frac{8\pi}{\beta n} \bigg)^{\frac{n}{2}}  e^{\bigO(n^{2/3})}.
\end{align*}
Hence, 
\begin{align}\label{sum J2 bound}
\sum_{m_{2}=1}^{5n^{\frac{1+\epsilon}{2}}} |J_{2}(m_{2})| \leq e^{n (M(f)-\alpha)} 2^{\beta\frac{n(n-1)}{2}} \bigg( \frac{8\pi}{\beta n} \bigg)^{\frac{n}{2}} e^{\bigO( \frac{n}{\log n})}.
\end{align}
We now turn to the analysis of $J_{2}(0)$. For this, define $\tilde{S}_{0}=\tilde{S}_{0}(\boldsymbol{\eta})$ and $\tilde{S}_{1}=\tilde{S}_{1}(\boldsymbol{\eta})$ by
\begin{align*}
& \tilde{S}_{0} = \{j : |\eta_{j}| \leq n^{-\frac{1}{2}+\epsilon}\}, & & \tilde{S}_{1} = \{j : n^{-\frac{1}{2}+\epsilon} < |\eta_{j}| \leq 2\tau\},
\end{align*}
and let $\tilde{s}_{0} = \# \tilde{S}_{0}$ and $\tilde{s}_{1} = \# \tilde{S}_{1}$. Define also $J_{3}(\tilde{m}_{1})$ to be the contribution to \eqref{If after change of variables} from $\{\boldsymbol{\eta} \in (-\pi,\pi]^{n}: \tilde{s}_{0}(\boldsymbol{\eta}) = n-\tilde{m}_{1} \mbox{ and } \tilde{s}_{1}(\boldsymbol{\eta}) = \tilde{m}_{1} \}$, and note that $J_{2}(0) = \sum_{\tilde{m}_{1}=0}^{n}J_{3}(\tilde{m}_{1})$. In the same way as we proved \eqref{a nice inequality} (but with $s_{2}=0$), we note that the modulus of the integrand in \eqref{If after change of variables} is bounded above by
\begin{align}\label{lol1}
& 2^{\beta\frac{n(n-1)}{2}} e^{\tilde{s}_{1} M(f)} \exp \bigg( -\frac{\beta}{8}(n-2)\sum_{j =1}^{n} \eta_{j}^{2} + \frac{\beta}{12}(n-1)\sum_{j =1}^{n}\eta_{j}^{4} + \sum_{j\in \tilde{S}_{0}}\re \mathsf{f}(\eta_{j}) \bigg).
\end{align}
Using \eqref{lol1} and Lemma \ref{lemma:exp int over 2t} with $\delta = \frac{\epsilon}{4}$, $k=0$ and $k=2$ (see also \eqref{lol13} and \eqref{lol14} below), we find 
\begin{align*}
& |J_{3}(\tilde{m}_{1})|  = \bigg| \binom{n}{\tilde{m}_{1}} \int_{|\eta_{1}|,\ldots,|\eta_{\tilde{m}_{1}}|\in (n^{-\frac{1}{2}+\epsilon},2\tau]}\int_{|\eta_{\tilde{m}_{1}+1}|,\ldots,|\eta_{n}|\leq n^{-\frac{1}{2}+\epsilon}} \prod_{1 \leq j < k \leq n} |e^{i\eta_{j}}+e^{-i\eta_{k}}|^{\beta}\prod_{j=1}^{n} e^{\mathsf{f}(\eta_{j})} d\eta_{j} \bigg| \\
&  \leq  2^{\beta\frac{n(n-1)}{2}} n^{\tilde{m}_{1}} \bigg( \int_{-n^{- \frac{1}{2}+\epsilon}}^{n^{- \frac{1}{2}+\epsilon}} e^{\re \mathsf{f}(x)}\exp\big(-\tfrac{\beta}{8}(n-2) x^{2} + \tfrac{\beta}{12}(n-1) x^{4}\big)dx \bigg)^{n-\tilde{m}_{1}} \\
& \hspace{1.8cm} \times e^{\tilde{m}_{1} M(f)} \bigg( 2 \int_{n^{- \frac{1}{2}+\epsilon}}^{2\tau} \exp\big(-\tfrac{\beta}{8}(n-2) x^{2} + \tfrac{\beta}{12}(n-1) x^{4}\big)dx \bigg)^{\tilde{m}_{1}} \\
& \leq 2^{\beta\frac{n(n-1)}{2}} n^{\tilde{m}_{1}} e^{(n-\tilde{m}_{1})\re \mathsf{f}(0)} \bigg( \frac{8 \pi}{\beta (n-2)} \bigg)^{\frac{n-\tilde{m}_{1}}{2}} e^{\bigO(n^{\epsilon})} \times e^{\tilde{m}_{1}M(f)}2^{\tilde{m}_{1}} \exp \Big( -\tfrac{\beta}{16}n^{2\epsilon}\tilde{m}_{1} \Big) \\
& \leq e^{n \, \re \mathsf{f}(0)} 2^{\beta\frac{n(n-1)}{2}} \bigg( \frac{8 \pi}{\beta n} \bigg)^{\frac{n}{2}} \exp \Big( -\tfrac{\beta}{16}n^{2\epsilon}\tilde{m}_{1} + \bigO(n^{\epsilon}) + \bigO(\tilde{m}_{1} \log n) \Big)
\end{align*}
for all sufficiently large $n$. For the second inequality above, we have used (recall that $\mathsf{f}$ is $C^{2,q}$ in a neighborhood of $0$)
\begin{align}
& \bigg( \int_{-n^{- \frac{1}{2}+\epsilon}}^{n^{- \frac{1}{2}+\epsilon}} e^{\re \mathsf{f}(x)}\exp\big(-\tfrac{\beta}{8}(n-2) x^{2} + \tfrac{\beta}{12}(n-1) x^{4}\big)dx \bigg)^{n-\tilde{m}_{1}} \nonumber \\
& \leq \bigg( e^{\re \mathsf{f}(0)}\int_{-n^{- \frac{1}{2}+\epsilon}}^{n^{- \frac{1}{2}+\epsilon}} (1+Cx^{2}) \exp\big(-\tfrac{\beta}{8}(n-2) x^{2} + \tfrac{\beta}{12}(n-1) x^{4}\big)dx \bigg)^{n-\tilde{m}_{1}} \nonumber \\
& = \bigg( e^{\re \mathsf{f}(0)} \frac{\sqrt{8\pi}}{\sqrt{\beta(n-2)}} \big( 1+\bigO(n^{-1+\epsilon}) \big) \bigg)^{n-\tilde{m}_{1}} = e^{(n-\tilde{m}_{1})\re \mathsf{f}(0)} \bigg( \frac{8\pi}{\beta(n-2)} \bigg)^{\frac{n-\tilde{m}_{1}}{2}}e^{\bigO(n^{\epsilon})}. \label{lol13}
\end{align}
We have also used that the maximum of $\exp\big(-\tfrac{\beta}{8}(n-2) x^{2} + \tfrac{\beta}{12}(n-1) x^{4}\big)$ over the bounded interval $[n^{- \frac{1}{2}+\epsilon},2\tau]$ is attained at $x=n^{- \frac{1}{2}+\epsilon}$ for all $n$ sufficiently large to conclude that
\begin{align}\label{lol14}
\int_{n^{- \frac{1}{2}+\epsilon}}^{2\tau} \exp\big(-\tfrac{\beta}{8}(n-2) x^{2} + \tfrac{\beta}{12}(n-1) x^{4}\big)dx \leq \exp \Big( -\tfrac{\beta}{16}n^{2\epsilon} \Big)
\end{align}
holds for all large $n$.

Hence, for some $c_{3}>0$ and all large enough $n$,
\begin{align}\label{bound on sum J3}
\sum_{\tilde{m}_{1}=1}^{n}|J_{3}(\tilde{m}_{1})| \leq e^{n \, \re \mathsf{f}(0)} 2^{\beta\frac{n(n-1)}{2}} \bigg( \frac{8 \pi}{\beta n} \bigg)^{\frac{n}{2}} e^{-c_{3}n^{2\epsilon}}.
\end{align}
Finally, we turn to the analysis of $J_{3}(0)$. Since $\mathsf{f}$ is $C^{2,q}$ in a neighborhood of $0$, as $x \to 0$ we have
\begin{align*}
& \log\big[ 2^{\beta}\,|\hspace{-0.04cm}\cos \tfrac{x}{2}|^{\beta} \big] = \beta \log 2 - \frac{\beta}{8}x^{2} - \frac{\beta}{192}x^{4} + \bigO(x^{6}), \\
& \mathsf{f}(x) = \mathsf{f}(0) + \mathsf{f}'(0)x + \frac{1}{2}\mathsf{f}''(0) x^{2} + \bigO(x^{2+q}),
\end{align*}
and thus
\begin{align}
J_{3}(0) & = 2^{\beta \frac{n(n-1)}{2}}e^{n \, \mathsf{f}(0)} \int_{U_{n}(n^{-\frac{1}{2}+\epsilon})} \exp \bigg( - \frac{\beta}{8}\sum_{j<k}(\eta_{j}+\eta_{k})^{2} - \frac{\beta}{192}\sum_{j<k}(\eta_{j}+\eta_{k})^{4} + \bigO\bigg(\sum_{j<k}(\eta_{j}+\eta_{k})^{6}\bigg) \nonumber \\
& \hspace{4.75cm} + \mathsf{f}'(0) \sum_{j=1}^{n}\eta_{j} + \frac{\mathsf{f}''(0)}{2} \sum_{j=1}^{n} \eta_{j}^{2} + \bigO\bigg(\sum_{j=1}^{n} \eta_{j}^{2+q}\bigg) \bigg) \prod_{j=1}^{n} d\eta_{j}. \label{lol7}
\end{align}
For $\boldsymbol{\eta} \in U_{n}(n^{-\frac{1}{2}+\epsilon})$, 
\begin{align*}
& \bigO\bigg(\sum_{j<k}(\eta_{j}+\eta_{k})^{6}\bigg) = \bigO(n^{-1+6\epsilon}), \qquad \bigO\bigg(\sum_{j=1}^{n} \eta_{j}^{2+q}\bigg) = \bigO(n^{-\frac{q}{2}+(2+q)\epsilon}).
\end{align*}
Since $\epsilon \in (0,\frac{1}{14})$ is fixed, we have $n^{-\frac{q}{2}+(2+q)\epsilon} + n^{-1+6\epsilon} = \bigO(n^{-\frac{q}{2}+(2+q)\epsilon})$. Hence, applying the transformation $\boldsymbol{\eta}=T\boldsymbol{y}$ of Section \ref{section:prelim}, and using Lemma \ref{eta to y transformation} (a) and (b) (using in particular that $\det T = 1-\gamma = \frac{1}{\sqrt{2}}(1+\bigO(n^{-1})$), we obtain
\begin{align}
J_{3}(0) & = 2^{\beta \frac{n(n-1)}{2}}\frac{e^{n \mathsf{f}(0)}}{\sqrt{2}} \big( 1+\bigO(n^{-1}) \big) \int_{T^{-1}U_{n}(n^{-\frac{1}{2}+\epsilon})} \hspace{-0.15cm} \exp \bigg\{ \hspace{-0.15cm} - \hspace{-0.05cm} \frac{\beta}{8} (n-2)\mu_{2} - \frac{\beta}{192}\bigg[ (n-8)\mu_{4}   \nonumber \\
& + \bigg( 4(1-2\gamma)+\frac{32\gamma}{n} \bigg) \mu_{1}\mu_{3} + 3\mu_{2}^{2} - \bigg( \frac{24\gamma(1-\gamma)}{n} + \frac{48\gamma^{2}}{n^{2}} \bigg) \mu_{1}^{2}\mu_{2} \nonumber \\
& + \bigg( 8\gamma^{2}(1-\gamma)(3-\gamma)\frac{1}{n^{2}}+8\gamma^{3}(4-\gamma)\frac{1}{n^{3}} \bigg) \mu_{1}^{4} \bigg] + \mathsf{f}'(0) (1-\gamma)\mu_{1} \nonumber \\
&  + \frac{\mathsf{f}''(0)}{2}\bigg( \mu_{2} - \gamma(2-\gamma)\frac{\mu_{1}^{2}}{n} \bigg) + \bigO\big(n^{-\frac{q}{2}+(2+q)\epsilon}\big) \bigg\} \prod_{j=1}^{n} dy_{j}. \label{J30 isaev}
\end{align}
Note that the above $\bigO\big(n^{-\frac{q}{2}+(2+q)\epsilon}\big)$ term can be replaced by $\bigO\big(n^{-1+6\epsilon}\big)$ if $f \equiv 0$. If $f \not\equiv 0$, then $\bigO\big(n^{-\frac{q}{2}+(2+q)\epsilon}\big)$ decays since we assume that $\epsilon < \frac{q}{2(2+q)}$.

By Lemma \ref{eta to y transformation} (c), $U_{n}(\frac{n^{-\frac{1}{2}+\epsilon}}{1+\gamma}) \subseteq T^{-1}U_{n}(n^{-\frac{1}{2}+\epsilon}) \subseteq U_{n}(\frac{n^{-\frac{1}{2}+\epsilon}}{1-\gamma})$. Let $\mathcal{G}(\boldsymbol{y})$ be the argument of the exponential in \eqref{J30 isaev}, and let $\epsilon_{n}:=-\log(1-\gamma)/\log n$. We have
\begin{align}
J_{3}(0) & = 2^{\beta \frac{n(n-1)}{2}}\frac{e^{n \mathsf{f}(0)}}{\sqrt{2}} \int_{U_{n}(n^{-\frac{1}{2}+\epsilon-\epsilon_{n}})} \exp (\mathcal{G}(\boldsymbol{y}))d\boldsymbol{y} \nonumber \\
& + 2^{\beta \frac{n(n-1)}{2}}\frac{e^{n \mathsf{f}(0)}}{\sqrt{2}} \int_{T^{-1}U_{n}(n^{-\frac{1}{2}+\epsilon}) \setminus U_{n}(n^{-\frac{1}{2}+\epsilon-\epsilon_{n}})} \exp (\mathcal{G}(\boldsymbol{y}))d\boldsymbol{y}. \label{lol3}
\end{align}
For the first integral over $U_{n}(n^{-\frac{1}{2}+\epsilon-\epsilon_{n}})$, since $\epsilon \in (0,\frac{1}{14})$, we can apply Lemma \ref{lemma:ABCD} with $\delta = \frac{q}{2}-(2+q)\epsilon >0$ and
\begin{align}
& A=\frac{\beta}{8}\frac{n-2}{n}-\frac{\mathsf{f}''(0)}{2n}, \quad B=0, \quad C=0, \quad D=0, \quad E = -\frac{\beta}{192}\frac{n-8}{n}, \quad F=-\frac{\beta}{64}, \nonumber \\
& G=-\frac{\beta}{48}\bigg( 1-2\gamma + \frac{8\gamma}{n} \bigg), \quad H=\frac{\beta}{8}\bigg( \gamma(1-\gamma)+\frac{2\gamma^{2}}{n} \bigg), \quad I = - \frac{\beta}{24}\bigg( \gamma^{2}(1-\gamma)(3-\gamma) + \frac{\gamma^{3}(4-\gamma)}{n} \bigg), \nonumber \\
& J=\mathsf{f}'(0)(1-\gamma), \quad K = - \frac{\mathsf{f}''(0)}{2}\gamma(2-\gamma), \label{coeff ABCDEF in proof}
\end{align}
to get
\begin{multline}\label{lol4}
2^{\beta \frac{n(n-1)}{2}}\frac{e^{n \mathsf{f}(0)}}{\sqrt{2}} \int_{U_{n}(n^{-\frac{1}{2}+\epsilon-\epsilon_{n}})} \exp (\mathcal{G}(\boldsymbol{y}))d\boldsymbol{y} \\
= 2^{\beta \frac{n(n-1)}{2}}\frac{e^{n \mathsf{f}(0)}}{\sqrt{2}} \bigg( \frac{8\pi}{\beta n} \bigg)^{\frac{n}{2}} \exp\bigg( 1-\frac{1}{2\beta}+\frac{\mathsf{f}'(0)^{2}}{\beta} + \frac{2\mathsf{f}''(0)}{\beta} + \bigO(n^{-\zeta'}) \bigg),
\end{multline}
for any fixed $\zeta' < \min\{\frac{q}{2}-(2+q)\epsilon,\frac{1}{4}-\frac{1}{2}\epsilon\}$. The integral over $T^{-1}U_{n}(n^{-\frac{1}{2}+\epsilon}) \setminus U_{n}(n^{-\frac{1}{2}+\epsilon-\epsilon_{n}})$ in \eqref{lol3} can be estimated as follows:
\begin{align}
& \bigg|2^{\beta \frac{n(n-1)}{2}}\frac{e^{n \mathsf{f}(0)}}{\sqrt{2}} \int_{T^{-1}U_{n}(n^{-\frac{1}{2}+\epsilon}) \setminus U_{n}(n^{-\frac{1}{2}+\epsilon-\epsilon_{n}})} \exp (\mathcal{G}(\boldsymbol{y}))d\boldsymbol{y}\bigg| \nonumber \\
& \leq 2^{\beta \frac{n(n-1)}{2}}\frac{e^{n \re \mathsf{f}(0)}}{\sqrt{2}} \int_{U_{n}(n^{-\frac{1}{2}+\epsilon+\epsilon_{n}}) \setminus U_{n}(n^{-\frac{1}{2}+\epsilon-\epsilon_{n}})} \exp (\re \mathcal{G}(\boldsymbol{y}))d\boldsymbol{y} \nonumber \\
& \leq 2^{\beta \frac{n(n-1)}{2}}\frac{e^{n \re \mathsf{f}(0)}}{\sqrt{2}} \bigg( \frac{8\pi}{\beta n} \bigg)^{\frac{n}{2}} \bigO(n^{-\zeta}), \label{lol5}
\end{align}
for any fixed $\zeta < \min\{\frac{q}{2}-(2+q)\epsilon,\frac{1}{4}-\frac{1}{2}\epsilon\}$, where for the last inequality we have used twice Lemma \ref{lemma:ABCD} with
\begin{align}
& A=\frac{\beta}{8}\frac{n-2}{n}-\frac{\re \mathsf{f}''(0)}{2n}, \quad B=0, \quad C=0, \quad D=0, \quad E = -\frac{\beta}{192}\frac{n-8}{n}, \quad F=-\frac{\beta}{64}, \nonumber \\
& G=-\frac{\beta}{48}\bigg( 1-2\gamma + \frac{8\gamma}{n} \bigg), \quad H=\frac{\beta}{8}\bigg( \gamma(1-\gamma)+\frac{2\gamma^{2}}{n} \bigg), \quad I = - \frac{\beta}{24}\bigg( \gamma^{2}(1-\gamma)(3-\gamma) + \frac{\gamma^{3}(4-\gamma)}{n} \bigg), \nonumber \\
& J=(1-\gamma)\re \mathsf{f}'(0), \quad K = - \frac{\re \mathsf{f}''(0)}{2}\gamma(2-\gamma). \label{coeff ABCDEF in proof Re}
\end{align}
By combining \eqref{lol3}, \eqref{lol4} and \eqref{lol5}, we obtain
\begin{align}\label{J30}
J_{3}(0) = 2^{\beta \frac{n(n-1)}{2}}\frac{e^{n \mathsf{f}(0)}}{\sqrt{2}} \bigg( \frac{8\pi}{\beta n} \bigg)^{\frac{n}{2}} \exp\bigg( 1-\frac{1}{2\beta}+\frac{\mathsf{f}'(0)^{2}}{\beta} + \frac{2\mathsf{f}''(0)}{\beta} + \bigO(n^{-\zeta}) \bigg).
\end{align}
Using now \eqref{sum J2 bound}, \eqref{bound on sum J3}, \eqref{J30} and \eqref{technical condition}, we conclude that 
\begin{align*}
\int \ldots \int_{\mathcal{J}_{2}} |e^{i\eta_{j}}+e^{-i\eta_{k}}|^{\beta}\prod_{j=1}^{n} e^{\mathsf{f}(\eta_{j})} d\eta_{j} = J_{3}(0) \big( 1+ \bigO(e^{-cn^{2\epsilon}}) \big),
\end{align*}
for some $c>0$. Similarly, reducing $c>0$ if necessary, we find
\begin{align*}
\int \ldots \int_{\tilde{\mathcal{J}}_{2}} |e^{i\eta_{j}}+e^{-i\eta_{k}}|^{\beta}\prod_{j=1}^{n} e^{\mathsf{f}(\eta_{j})} d\eta_{j} = \tilde{J}_{3}(0) \big( 1+ \bigO(e^{-cn^{2\epsilon}}) \big),
\end{align*}
where $\tilde{J}_{3}(0)$ satisfies
\begin{align}\label{J30 bis}
\tilde{J}_{3}(0) = 2^{\beta \frac{n(n-1)}{2}}\frac{e^{n\mathsf{f}(\pi)}}{\sqrt{2}} \bigg( \frac{8\pi}{\beta n} \bigg)^{\frac{n}{2}} \exp\bigg( 1-\frac{1}{2\beta}+\frac{\mathsf{f}'(\pi)^{2}}{\beta} + \frac{2\mathsf{f}''(\pi)}{\beta} + \bigO(n^{-\zeta}) \bigg).
\end{align}
Hence, by \eqref{bound on J1}, \eqref{sum J2 bound}, \eqref{bound on sum J3}, \eqref{J30} and \eqref{J30 bis}, we have
\begin{align}
I(f) & = J_{3}(0) \big( 1+ \bigO(e^{-cn^{2\epsilon}}) \big) + \tilde{J}_{3}(0) \big( 1+ \bigO(e^{-cn^{2\epsilon}}) \big) \label{prob interpretation}\\
& = 2^{\beta \frac{n(n-1)}{2}-\frac{1}{2}} \bigg( \frac{8\pi}{\beta n} \bigg)^{\frac{n}{2}} \bigg[e^{n\mathsf{f}(0)}\exp\bigg( 1-\frac{1}{2\beta}+\frac{\mathsf{f}'(0)^{2}}{\beta} + \frac{2\mathsf{f}''(0)}{\beta} + \bigO(n^{-\zeta}) \bigg) \nonumber \\
& \hspace{3.22cm} + e^{n\mathsf{f}(\pi)}\exp\bigg( 1-\frac{1}{2\beta}+\frac{\mathsf{f}'(\pi)^{2}}{\beta} + \frac{2\mathsf{f}''(\pi)}{\beta} + \bigO(n^{-\zeta}) \bigg) \bigg],  \nonumber 
\end{align}
which is \eqref{asymptotic If}.

Note that for $f \equiv 0$, $(J_{3}(0)+\tilde{J}_{3}(0))/I(0)$ is equal to 
\begin{align*}
& \mathbb{P}\bigg( \hspace{-0.05cm} \Big( |\theta_{j}-\tfrac{\pi}{2}|\leq n^{-\frac{1}{2}+\epsilon} \hspace{-0.065cm} \mbox{ for all } j \hspace{-0.05cm} \in \hspace{-0.05cm} \{1,\ldots,n\} \Big) \hspace{-0.1cm} \mbox{ or } \hspace{-0.05cm} \Big( |\theta_{j}+\tfrac{\pi}{2}|\leq n^{-\frac{1}{2}+\epsilon} \hspace{-0.065cm} \mbox{ for all } j \hspace{-0.05cm} \in \hspace{-0.05cm} \{1,\ldots,n\} \Big) \hspace{-0.05cm} \bigg) \\
& \leq \mathbb{P}\bigg( \hspace{-0.05cm} \Big( |e^{i\theta_{j}}-i|\leq n^{-\frac{1}{2}+\epsilon} \hspace{-0.065cm} \mbox{ for all } j \hspace{-0.05cm} \in \hspace{-0.05cm} \{1,\ldots,n\} \Big) \hspace{-0.1cm} \mbox{ or } \hspace{-0.05cm} \Big( |e^{i\theta_{j}}+i|\leq n^{-\frac{1}{2}+\epsilon} \hspace{-0.065cm} \mbox{ for all } j \hspace{-0.05cm} \in \hspace{-0.05cm} \{1,\ldots,n\} \Big) \hspace{-0.05cm} \bigg).
\end{align*}
For $f \equiv 0$, we also have $J_{3}(0),\tilde{J}_{3}(0)>0$. Thus, by \eqref{prob interpretation}, we get $I(0) = (J_{3}(0)+ \tilde{J}_{3}(0)) \big( 1+ \bigO(e^{-cn^{2\epsilon}}) \big)$, and Theorem \ref{thm:prob} follows.
\subsection{The case of $I(tf)$, $t \in \mathbb{R}$ and $\re f \equiv 0$}
If $f$ is replaced by $tf$ in Subsection \ref{subsection:general f}, and if $\re f \equiv 0$, then the estimates \eqref{bound on J1}, \eqref{sum J2 bound}, \eqref{bound on sum J3} become
\begin{align*}
& |J_{1}| \leq 3 \, (2\pi)^{n} 2^{\beta\frac{n(n-1)}{2}} (\cos \tau)^{\beta n^{1+\epsilon}},  \\
& \sum_{m_{2}=1}^{5 n^{\frac{1+\epsilon}{2}}} |J_{2}(m_{2})| \leq e^{- n \alpha} 2^{\beta\frac{n(n-1)}{2}} \bigg( \frac{8\pi}{\beta n} \bigg)^{\frac{n}{2}} e^{c_{2} \frac{n}{\log n}},  \\
& \sum_{\tilde{m}_{1}=1}^{n}|J_{3}(\tilde{m}_{1})| \leq 2^{\beta\frac{n(n-1)}{2}} \bigg( \frac{8 \pi}{\beta n} \bigg)^{\frac{n}{2}} e^{-c_{3}n^{2\epsilon}}, 
\end{align*}
where $c_{2}>0$ and $c_{3}>0$ are independent of $t$. Furthermore, it directly follows from \eqref{coeff ABCDEF in proof}, \eqref{coeff ABCDEF in proof Re} (with $f$ replaced by $tf$) and Lemma \ref{lemma:ABCD} that the $\bigO(n^{-\zeta})$-terms in \eqref{J30} and \eqref{J30 bis} are uniform for $t$ in compact subsets of $\mathbb{R}$. This proves that the $\bigO(n^{-\zeta})$-terms in \eqref{asymptotic If} are uniform for $t$ in compact subsets of $\mathbb{R}$.

\paragraph{Conflict of interest statement.} The author has no conflict of interest to disclose.

\paragraph{Acknowledgements.} The author is grateful to Brendan McKay and Peter Forrester for useful remarks. Support is acknowledged from the Swedish Research Council, Grant No. 2021-04626.

\footnotesize

\end{document}